\newtheorem{theorem}{Theorem}[section]
\newtheorem{lemma}[theorem]{Lemma}
\newtheorem{corollary}[theorem]{Corollary}
\newtheorem{proposition}[theorem]{Proposition}
\newcommand\C{{\mathcal C}}
\newcommand\Zw{{\mathbf w}}
\newcommand\Zu{{\mathbf u}}
\newcommand\Zv{{\mathbf v}}
\newcommand\ints{{\mathbb Z}}
\newcommand\re{{\mathbb R}}
\newcommand\rats{{\mathbb Q}}
\newcommand\pmat[1]{\begin{pmatrix} #1 \end{pmatrix}}
\DeclareMathOperator{\Aut}{Aut}
\newcommand\pls[2]{v_{#1, #2}}
\newcommand\mn[2]{w_{#1, #2}}
\title{Simple eigenvalues of cubic vertex-transitive graphs}
\author{Krystal Guo\thanks{Part of this work was done when K. Guo was a  PhD student at Simon Fraser University, see \cite{G2015}. The author was supported in part by the Natural Sciences and Engineering Research Council of Canada (NSERC), Postgraduate Scholarships - Doctoral (PGSD).  } \\[1mm]
   {Korteweg-De Vries Institute}\\{University of Amsterdam} \\
   {Amsterdam, The Netherlands} \\ \texttt{k.guo@uva.nl}
    \and
   Bojan Mohar\thanks{B.M.~was supported in part by the NSERC Discovery Grant R611450 (Canada),
   by the Canada Research Chairs program, and by the Research Project J1-8130 of ARRS (Slovenia).}
   \thanks{On leave from IMFM, Department of Mathematics, University of Ljubljana.}%
 \\[1mm]
 {Department of Mathematics}\\{Simon Fraser University}\\{Burnaby, B.C. V5A 1S6}\\ \texttt{mohar@sfu.ca}
 }
 \date{December 16, 2022}
\begin{document}
\maketitle

\begin{abstract}
If $\Zv \in \re^{V(X)}$ is an eigenvector for eigenvalue $\lambda$ of a graph $X$ and $\alpha$ is an automorphism of $X$, then $\alpha(\Zv)$ is also an eigenvector for $\lambda$. Thus it is rather exceptional for an eigenvalue of a vertex-transitive graph to be simple. We study cubic vertex-transitive graphs with a non-trivial simple eigenvalue, and discover remarkable connections to arc-transitivity, regular maps and Chebyshev polynomials.
\end{abstract}

%

\section{Introduction}

There is a surprising inverse correlation between the number of distinct eigenvalues of a graph and the size of its automorphism group. If the automorphism group of a graph $G$ is arc-transitive, the graph has at most two simple eigenvalues. Conversely, if a connected graph on $n$ vertices has at most 2 distinct eigenvalues, then the graph is complete and the automorphism group is the full symmetric group of $n$ elements. We would like to study classes of graphs with many automorphisms and several simple eigenvalues, with the intuition that they should not be many in number and with the hope that we may describe them.

We consider a vertex-transitive graph $X$ on $n$ vertices and let $A$ denote its adjacency
matrix. We speak of eigenvalues and eigenvectors of $A$ and of $X$ interchangeably. There has been extensive study about the interplay of eigenvalues of a graph and various graph properties, such as the diameter \cite{C89, M91} or the chromatic number \cite{H70, H95}; see also \cite{MoPo93}. The relationship between symmetries of a graph and its eigenvalues has also been investigated extensively, for example in \cite{CG97, R07, R13}.

In this paper, we focus on simple eigenvalues of cubic vertex-transitive graphs. If $\lambda$ is a simple eigenvalue of such a graph, it must be equal to $\pm 3$ or to $\pm 1$. Cubic vertex-transitive graphs have been studied extensively \cite{PoSpVe12, PoSpVe10} and a census of all such graphs with at most 1280 vertices is maintained by Poto\v{c}nik, Spiga and Verret in \cite{census3}.

Using tools from diverse areas including topological graph theory, number theory and Chebyshev polynomials,  we study the combinatorial structure of cubic, vertex-transitive graphs with $\lambda=1$ as a simple eigenvalue and give several families of graphs with such spectral property, and completely classify some of special subfamilies. Somewhat more generally, we classify all generalized Petersen graphs (which have one or two orbits under the automorphism group action) with simple eigenvalue 1. We also consider the possibility that $-1$ and $+1$ are both simple eigenvalues, and we prove that this happens only when the graph is bipartite.

\begin{figure}[htbp]
  \centering
  \includegraphics[scale=0.9]{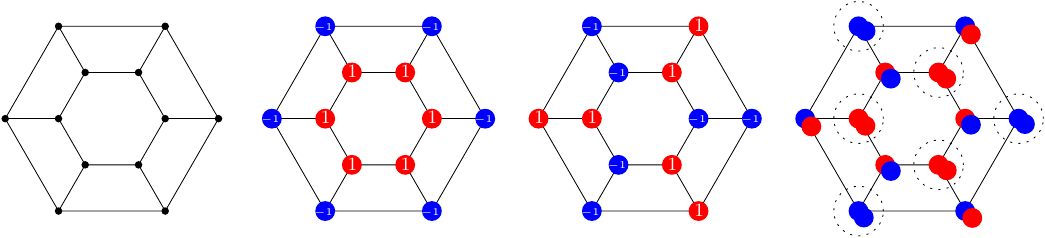}
  \caption{Eigenvectors for the prism graph of order $12$. \textit{Left to right:} the prism graph on $12$ vertices, an eigenvector with eigenvalue $1$, an eigenvector with eigenvalue $-1$, and the bipartite classes of the prism with classes determined by whether or not the eigenvectors agree or disagree at a vertex. \label{fig:prism6}}
\end{figure}

For example, there are $85$ connected cubic graphs, up to isomorphism, on $12$ vertices; of these $21$ have $1$ as a simple eigenvalue. There is exactly one graph (up to isomorphism) on $12$ vertices,  which is vertex-transitive, cubic and has $1$ as a simple eigenvalue, which is the prism graph on $12$ vertices, as shown in Figure \ref{fig:prism6}. This graph has both $1$ and $-1$ as simple eigenvalues. The eigenvectors, depicted as assignments $\pm 1$ to the vertices, are shown in Figure \ref{fig:prism6}; we will see in Section \ref{sec:partns-eigs}, from a classical result of Petersdorf and Sachs \cite{PS70}, that any vertex-transitive graph with an eigenvector with entries in $\pm 1$. Here, colouring a vertex with colour $0$ if the eigenvectors for $1$ and $-1$ agree and with colour $1$ otherwise results in a proper $2$-colouring of the graph; one of the colour classes is shown by a dotted circle in the rightmost picture in  Figure \ref{fig:prism6}. We show this holds in general, in Section \ref{sec:bip}.

The organization of the paper is as follows.
In Section \ref{sec:partns-eigs}, we give  preliminaries regarding eigenvectors of simple eigenvalues.
In Section \ref{sec:bip}, we use these to show the previously mentioned result that having both $1,-1$ as simple eigenvalues implies that a cubic graph is bipartite. We continue to extract more information about the structure of the graph, as constrained by the eigenvector, in Section \ref{sec:comb}.
In Section \ref{sec:regmaps}, we find a connection to regular maps; we show that the vertex deletion of a regular map gives a cubic vertex-transitive graph with $1$ as a (not necessarily simple) eigenvalue.
Finally, in Section \ref{sec:families},  we give several infinite families of examples of cubic vertex-transitive graphs with $1$ as a simple eigenvalue; in each case, we find an infinite family of cubic vertex-transitive graph with $1$ as an eigenvalue and then we classify when $1$ occurs as a simple eigenvalue.  In particular, we classify which generalized Petersen graphs have $1$ as a simple eigenvalue, using classical results in number theory about vanishing roots of unity and sums of cosines.

\section{Partitions and eigenvectors}\label{sec:partns-eigs}

Consider a vertex-transitive graph $X$ with vertex-set $V$ and let $A$ denote its adjacency matrix. We index rows and columns of the adjacency matrix $A = A(X)$  of $X$ by vertices of $X$. We will use functional notation where $A(x,y)$ denotes the $(x,y)$-entry of $A$. For a vector $\Zv$ indexed by the vertices of $X$ and a vertex $x$ of $X$, we write $\Zv(x)$ for the entry of $\Zv$ corresponding to $x$.

An eigenvalue of a graph $X$ is \emph{simple} if the corresponding eigenspace is 1-dimensional. Let $\lambda$ be an
eigenvalue of $X$ with eigenvector $\Zv$. The elements of the automorphism group of $X$, denoted $\Aut(X)$,
can be represented by $V\times V$ permutation matrices $P$ such that $P^T A P = A$.
Note that $P^T = P^{-1}$.

We may observe that if $A \Zv = \lambda \Zv$ and $\Zv \neq 0$, then
\[
A (P\Zv) = \lambda P \Zv
\]
and thus $P\Zv$ is also an eigenvector of $A$ with eigenvalue $\lambda$. Therefore, any automorphism $P$ of $X$ fixes the eigenspaces of $A$.

In particular, if $\lambda$ is a simple eigenvalue of $X$, then, the eigenspace of $\lambda$ has
dimension 1 and so
\[
P\Zv = \gamma \Zv
\]
for some scalar $\gamma \in \re$. Since $P$ is a permutation matrix, we have
$\gamma \in \{1, -1\}$. If $P$ is an automorphism of $X$ mapping
vertex $x$ to $y$, then $\Zv(x) = \pm \Zv(y)$.
Since $X$ is vertex-transitive, for each pair of vertices $x,y$, there exists an automorphism $P$ mapping
$x$ to $y$. Therefore $\Zv$ has entries $\pm \beta$ for some $\beta \in \re$. We may
scale the eigenvector to obtain that $\Zv$ is a $\pm 1$ vector.

We have the following standard theorem, which can be found in \cite{Bi93} or \cite{CDS95}.

\begin{theorem}[Petersdorf and Sachs \cite{PS70}]
\label{thm:PS70}
Let $X$ be a vertex-transitive graph of degree $k$.
If $\lambda$ is a simple eigenvalue of $X$, then
\[ \lambda = k - 2\alpha \]
for some integer $\alpha \in \{0, \ldots, k\}$.
\end{theorem}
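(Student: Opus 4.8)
The plan is to build directly on the observation, already made in the paragraph preceding the statement, that a simple eigenvalue $\lambda$ of a vertex-transitive graph $X$ admits a $\pm 1$-valued eigenvector $\Zv$; after that, everything is a one-vertex counting argument. First I would fix an arbitrary vertex $x$ of $X$ and read off the $x$-th coordinate of the eigenvalue equation $A\Zv = \lambda\Zv$, which gives
\[
\lambda\, \Zv(x) = \sum_{y \sim x} \Zv(y).
\]
The right-hand side is a sum of exactly $k$ terms, each equal to $\pm 1$, since $\Zv$ is a $\pm 1$ vector and $X$, being vertex-transitive of degree $k$, is $k$-regular.

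Next I would split the neighbourhood of $x$ according to sign: let $\alpha_x$ be the number of neighbours $y$ of $x$ with $\Zv(y) = -\Zv(x)$, so that the remaining $k - \alpha_x$ neighbours satisfy $\Zv(y) = \Zv(x)$. Substituting into the displayed identity yields
\[
\lambda\, \Zv(x) = (k - \alpha_x)\, \Zv(x) - \alpha_x\, \Zv(x) = (k - 2\alpha_x)\, \Zv(x),
\]
and since $\Zv(x) \in \{1,-1\}$ we may cancel it to obtain $\lambda = k - 2\alpha_x$. In particular $\lambda$ is an integer, and since $0 \le \alpha_x \le k$ it lies in the claimed range.

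Finally, the identity $\lambda = k - 2\alpha_x$ holds for every vertex $x$ with the same left-hand side, so $\alpha_x = (k-\lambda)/2$ is in fact independent of $x$; writing $\alpha$ for this common value gives $\lambda = k - 2\alpha$ with $\alpha \in \{0,\dots,k\}$, as desired. I do not expect any real obstacle here: the only genuinely non-trivial input is the reduction to a $\pm 1$-eigenvector, which is where vertex-transitivity together with simplicity of $\lambda$ is used, and the excerpt has already supplied it; the remaining steps are just the parity/counting computation above.
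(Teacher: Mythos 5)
Your proposal is correct and is essentially the same argument as the paper's: both reduce to the $\pm 1$-eigenvector established in the preceding discussion and then count the neighbours of a fixed vertex whose sign disagrees, obtaining $\lambda = k - 2\alpha$. The only cosmetic difference is that the paper normalizes $\Zv(x)=1$ while you cancel the factor $\Zv(x)$ at the end.
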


\begin{proof}
Let $\lambda$ be a simple eigenvalue of $X$ and let $\Zv$ be its $\pm 1$ eigenvector.
Let $x$ be a vertex of $X$. Without loss of generality, we may assume $\Zv(x) = 1$.
We have that
\begin{equation}\label{eq:ps}
\sum_{y\sim x} \Zv(y) = \lambda \Zv(x) = \lambda.
\end{equation}
Let $\alpha$ $(0\leq \alpha \leq k)$ be the number of neighbours $y$ of $x$ such that $\Zv(y) = -1$. Then (\ref{eq:ps}) implies that $\lambda = k-2\alpha$.
\end{proof}

This proof shows that $X$ has a $\pm 1$ eigenvector whose signs determine a partition
$$
   V(X) = V^+ \cup V^-
$$
such that the induced subgraphs $X[V^+]$ and $X[V^-]$ are $(k-\alpha)$-regular and the bipartite subgraph between $V^+$ and $V^-$ is $\alpha$-regular. Conversely, every such partition determines a $\pm 1$ eigenvector of $X$ for eigenvalue $\lambda = k-2\alpha$. We have the following observation.

\begin{proposition}
\label{prop:pm1vector}
Let $X$ be a connected $k$-regular graph with an eigenvector $\Zv$ for an eigenvalue $\lambda$, whose coordinates are all $\pm1$. Then $\lambda$ is an integer and $\lambda \equiv k \pmod 2$. The sets $V^+=\{x\in V(X)\mid \Zv(x)=1\}$ and $V^- = V(X)\setminus V^+$ induce $(\frac{k+\lambda}{2})$-regular subgraphs, while the edges joining $V^+$ and $V^-$ form a $(\frac{k-\lambda}{2})$-regular bipartite subgraph of $X$. Conversely, every such partition determines a $\pm1$ eigenvector for $\lambda$.
\end{proposition}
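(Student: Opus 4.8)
The plan is to re-run the local counting argument from the proof of Theorem~\ref{thm:PS70} directly, without invoking vertex-transitivity: the hypothesis that $\Zv$ is a $\pm1$ vector is exactly what that proof extracted from vertex-transitivity, so everything should go through verbatim once the roles of $V^+$ and $V^-$ are made explicit.

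First I would fix a vertex $x$ and write $d^+(x)$ (resp.\ $d^-(x)$) for the number of neighbours of $x$ lying in $V^+$ (resp.\ $V^-$). Two identities hold: $d^+(x)+d^-(x)=k$, from $k$-regularity, and $d^+(x)-d^-(x)=\sum_{y\sim x}\Zv(y)=\lambda\,\Zv(x)$, from $A\Zv=\lambda\Zv$. If $x\in V^+$ then $\Zv(x)=1$, so solving the two equations gives $d^+(x)=\tfrac{k+\lambda}{2}$ and $d^-(x)=\tfrac{k-\lambda}{2}$; if $x\in V^-$ then $\Zv(x)=-1$, and the same pair of equations gives $d^-(x)=\tfrac{k+\lambda}{2}$ and $d^+(x)=\tfrac{k-\lambda}{2}$. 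In all cases a vertex has $\tfrac{k+\lambda}{2}$ neighbours on its own side of the partition and $\tfrac{k-\lambda}{2}$ on the other side, which is precisely the assertion that $X[V^+]$ and $X[V^-]$ are $\tfrac{k+\lambda}{2}$-regular and that the edges between $V^+$ and $V^-$ form a $\tfrac{k-\lambda}{2}$-regular bipartite graph.

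For integrality and parity, note that $\Zv\neq 0$, so at least one of $V^+,V^-$ is nonempty; replacing $\Zv$ by $-\Zv$ if necessary we may assume $V^+\neq\emptyset$, and then for $x\in V^+$ the nonnegative integer $d^+(x)$ equals $\tfrac{k+\lambda}{2}$, so $\lambda=2d^+(x)-k$ is an integer with $\lambda\equiv k\pmod 2$. For the converse, given a partition with the stated regularities I would set $\Zv(x)=1$ on $V^+$ and $\Zv(x)=-1$ on $V^-$ and simply evaluate $(A\Zv)(x)$: for $x\in V^+$ it is $\tfrac{k+\lambda}{2}-\tfrac{k-\lambda}{2}=\lambda=\lambda\Zv(x)$, and for $x\in V^-$ it is $\tfrac{k-\lambda}{2}-\tfrac{k+\lambda}{2}=-\lambda=\lambda\Zv(x)$, so $A\Zv=\lambda\Zv$.

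I do not expect a genuine obstacle here; the only points requiring care are the sign bookkeeping in the case $x\in V^-$ and the trivial observation that $\Zv\neq 0$ forces one side of the partition to be nonempty, so that the integrality conclusion is not vacuous. Connectivity of $X$ is not actually needed for the statement as phrased — it ensures, for instance, that both $V^+$ and $V^-$ are nonempty when $0<\alpha<k$, which matters elsewhere but not for this proposition.
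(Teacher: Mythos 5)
Your proof is correct and follows essentially the same route as the paper: the paper's proof simply invokes the local identity $\sum_{y\sim x}\Zv(y)=\lambda\Zv(x)$ from the proof of Theorem~\ref{thm:PS70}, and you have spelled out exactly that counting argument together with the sign bookkeeping and the (easy) converse. Your closing remark that connectivity is not needed for this proposition is also accurate.
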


\begin{proof}
If $V^+=V(X)$ or $V^-=V(X)$, then $\lambda = k$ and there is nothing to prove. Otherwise, eq. (\ref{eq:ps}) gives the rest of the claims.
\end{proof}

\section{Cubic vertex-transitive graphs having $1$ and $-1$ as simple eigenvalues}\label{sec:bip}

A cubic graph $X$ has largest eigenvalue equal to $3$, which is simple if and only if $X$ is connected.
It is well-known that if $-3$ is also an eigenvalue of $X$ and $X$ is connected, then $-3$ is a simple
eigenvalue and $X$ is bipartite. By Theorem \ref{thm:PS70}, the only possible simple eigenvalues of a cubic vertex-transitive graph besides $\pm 3 $ are $\pm 1$.

A partition $\{V_1, \ldots, V_m\}$ of the vertices of a graph $X$ is said to be \emph{equitable} if the subgraph of $X$ induced by each $V_i$ is regular and the bipartite subgraph of $X$ induced by the edges from $V_i$ to $V_j$ is semi-regular, for each pair $i,j$ such that $i \neq j$. If that is the case, then we define the $m\times m$ \emph{quotient matrix} $B = [b_{ij}]_{i,j = 1}^m$ whose entries $b_{ij}$ are number of neighbours of any vertex in $V_i$ in $V_j$.

\begin{theorem} If a cubic vertex-transitive graph $X$ has both $1$ and $-1$ as simple eigenvalues, then $X$ is bipartite.
\end{theorem}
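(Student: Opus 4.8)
The plan is to show that the Hadamard (entrywise) product $\Zz:=\Zv\circ\Zw$ of a $\pm1$ eigenvector $\Zv$ for $\lambda=1$ and a $\pm1$ eigenvector $\Zw$ for $\lambda=-1$ (such $\pm1$ eigenvectors exist by the discussion preceding Theorem~\ref{thm:PS70}) is itself an eigenvector of $X$, necessarily for the eigenvalue $-3$. Since $X$ is connected --- the multiplicity of any eigenvalue of a vertex-transitive graph is a multiple of its number of components, so a simple eigenvalue forces connectedness --- the presence of $-3$ in the spectrum is equivalent to bipartiteness, which finishes the proof. Observe first that $\Zz$ is a $\pm1$ vector with $\langle\Zz,\one\rangle=\langle\Zv,\Zw\rangle=0$, because $\Zv$ and $\Zw$ lie in distinct eigenspaces; in particular $\Zz\neq\pm\one$.

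The key point is that $\Aut(X)$ acts on $\Zz$ through a single sign character. Since $1$ and $-1$ are simple, every automorphism $P$ satisfies $P\Zv=\chi_v(P)\Zv$ and $P\Zw=\chi_w(P)\Zw$ with $\chi_v(P),\chi_w(P)\in\{\pm1\}$, and $\chi_v,\chi_w\colon\Aut(X)\to\{\pm1\}$ are group homomorphisms. As $P$ merely permutes coordinates, $P\Zz=(P\Zv)\circ(P\Zw)=\chi_v(P)\chi_w(P)\,\Zz$, so $\Zz$ transforms under the character $\chi:=\chi_v\chi_w$. If $\chi$ were trivial, $\Zz$ would be $\Aut(X)$-invariant, hence constant on $V(X)$ by vertex-transitivity, contradicting $\Zz\perp\one$ and $\Zz\neq0$; so $\chi$ is nontrivial and $\ker\chi$ has index $2$ in $\Aut(X)$.

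Next I would show that the space $U_\chi:=\{u\in\re^{V(X)}:Pu=\chi(P)u\text{ for all }P\in\Aut(X)\}$ is one-dimensional. Evaluating $P\Zv=\chi_v(P)\Zv$ at a fixed vertex $x$ shows that any automorphism fixing $x$ lies in $\ker\chi_v$, and similarly in $\ker\chi_w$, hence in $\ker\chi$; thus the stabilizer of $x$ lies in $\ker\chi$, so $\ker\chi$ has exactly two orbits on $V(X)$, each of size $|V(X)|/2$. A transversal element of $\ker\chi$ must swap these two orbits, since otherwise each would be a proper nonempty $\Aut(X)$-invariant set, contradicting vertex-transitivity. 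As $\Zz$ is $\ker\chi$-invariant and $\pm1$-valued with equally many entries of each sign, the two orbits are precisely $\{x:\Zz(x)=1\}$ and $\{x:\Zz(x)=-1\}$; any $u\in U_\chi$ is then constant on each orbit and is negated by the transversal element that swaps them, so $u\in\langle\Zz\rangle$. Hence $U_\chi=\langle\Zz\rangle$, and since $A$ commutes with every automorphism it preserves $U_\chi$; as $\Zz\in U_\chi$ we get $A\Zz=\mu\Zz$ for some scalar $\mu$. By Proposition~\ref{prop:pm1vector}, $\mu$ is an odd integer, so $\mu\in\{-3,-1,1,3\}$; but $\mu=3$ forces $\Zz=\pm\one$, $\mu=1$ forces $\Zz=\pm\Zv$ and hence $\Zw=\pm\one$, and $\mu=-1$ forces $\Zv=\pm\one$, each contradicting the orthogonality relations above. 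Therefore $\mu=-3$, so $-3$ is an eigenvalue of $X$ and $X$ is bipartite.

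The step I expect to be the real obstacle is the one-dimensionality of $U_\chi$ --- equivalently, the assertion that this particular Hadamard product is forced to be an eigenvector --- since products of eigenvectors are not eigenvectors in general, and the argument leans entirely on combining vertex-transitivity with simplicity. A parallel, more combinatorial route avoids $U_\chi$: the two sign patterns partition $V(X)$ into the (at most four) nonempty classes obtained by intersecting the sign-sets of $\Zv$ with those of $\Zw$, each of which is $\Aut(X)$-invariant and on which the subgroup fixing both signs acts transitively; this makes the partition equitable with all parts of equal size and $\Zz$ constant on them, so $\Zz$ lifts an eigenvector of the symmetric $4\times4$ quotient matrix, which also has $\one$, $\Zv$, $\Zw$ as eigenvectors for $3,1,-1$. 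Requiring the diagonal entries of that quotient matrix to be nonnegative integers then restricts the remaining eigenvalue $\mu$ to $\{1,-3\}$, and $\mu=1$ is excluded because it would make $1$ a non-simple eigenvalue of $X$.
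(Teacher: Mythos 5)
Your proof is correct, and it takes a genuinely different route from the paper's. The paper argues combinatorially: it intersects the sign classes of the two eigenvectors to form the four sets $W^{\pm\pm}$, observes that $W^{++}$ induces either a perfect matching or an independent set, eliminates the matching case by interlacing applied to the equitable $4\times 4$ quotient matrix (whose eigenvalue $1$ would then have multiplicity two), and reads off the bipartition $W^{++}\cup W^{--}$ versus $W^{+-}\cup W^{-+}$ from the degree counts. You instead establish the stronger and cleaner fact that the Hadamard product $\Zz=\Zv\circ\Zw$ is an eigenvector for $-3$: each simple eigenspace carries a sign character of $\Aut(X)$, so $\Zz$ lies in the isotypic space $U_\chi$ of the product character, which you show is one-dimensional (via the orbit count for $\ker\chi$) and $A$-invariant, and the elimination of $\mu\in\{3,1,-1\}$ via orthogonality is airtight; your preliminary remark that a simple eigenvalue forces connectedness is also a point the paper leaves implicit. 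Your argument is more conceptual and portable --- the same mechanism shows that the entrywise product of $\pm1$ eigenvectors attached to two distinct nontrivial simple eigenvalues of any regular vertex-transitive graph is again a $\pm1$ eigenvector --- while the paper's argument is more elementary and exposes the local structure (the $2$-factors and the perfect matching) on which the rest of the paper relies; both identify the same bipartition, since your two $\ker\chi$-orbits are exactly $W^{++}\cup W^{--}$ and $W^{+-}\cup W^{-+}$. Two cosmetic points only: ``transversal element of $\ker\chi$'' should read ``element of $\Aut(X)\setminus\ker\chi$'', and in your closing combinatorial sketch the four classes are permuted by $\Aut(X)$ rather than individually invariant; neither affects your main argument, and the sketch is in any case redundant.
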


\begin{proof}
Let $\Zv$ and $\Zu$ be the $\pm 1$ eigenvectors for eigenvalues $1$ and $-1$, respectively. Let
\[
\begin{split}
V^+ &= \{x \in V(X) \mid \Zv(x) = 1\}, \\
V^- &= \{x \in V(X) \mid \Zv(x) = -1\}, \\
U^+ &= \{x \in V(X) \mid \Zu(x) = 1\}, \hbox{and} \\
U^- &= \{x \in V(X) \mid \Zu(x) = -1\}.
\end{split}
\]
For any automorphism $P$ in $\Aut(X)$, we have that $P$ must either fix both $V^+$ and $V^-$
or interchange them as sets. Similarly, $P$ either fixes both $U^+$ and $U^-$
or interchanges them. By using (\ref{eq:ps}), we see that $V^+$ and $V^-$ each induce a 2-regular subgraph
of $X$ and $U^+$ and $U^-$ each induce a 1-regular subgraph of $X$.

Let $W^{++} = V^+ \cap U^+ $, $W^{+-} = V^+ \cap U^-$, $W^{-+} = V^- \cap U^+ $
and $W^{--} = V^- \cap U^- $. Consider the subgraph $Y$ induced by the vertices in $W^{++}$. Since $W^{++}\subseteq U^+$,
each vertex of $W^{++}$ has degree 0 or 1 in $Y$. Since $X$ is vertex-transitive, the
automorphism group of $X$ must also act transitively on $Y$. Then $Y$ is either 1-regular (an induced matching)
or an independent set of vertices. The same conclusion applies to $W^{+-}$, $W^{-+}$, and $W^{--}$.

If $Y$ is 1-regular, then we easily conclude that the quotient matrix of the partition of $V(X)$ induced
by $W^{++}, W^{+-}, W^{-+}, W^{--}$ must be
\[
B = \pmat{ 1 & 1 & 0 &1 \\ 1 & 1 & 1 & 0\\
0 & 1 & 1 & 1\\ 1 & 0 & 1 & 1}
\]
and this partition is equitable. By the interlacing theorem (see \cite[Theorem 2.5.1]{BH}, for example), the eigenvalues of $B$ are a sub-multiset of the eigenvalues of $A$. The matrix $B$ has eigenvalue $1$ with multiplicity 2 and so $A(X)$ also has eigenvalue 1 with multiplicity at least 2. This contradicts
the assumption that 1 is a simple eigenvalue of $X$.

Therefore, it must be that $W^{++}$ is an independent set. In this case, by vertex transitivity, the same holds for
$ W^{+-}, W^{-+}$ and $W^{--}$. This implies that each vertex in $W^{++}$ has two neighbours in $W^{+-}$, one neighbour in $W^{-+}$, and no neighbours in $W^{++} \cup W^{--}$. In particular, the partition of $V(X)$ into sets $W^{++} \cup W^{--}$ and $W^{+-} \cup W^{-+}$
is a bipartition of the graph~$X$.
\end{proof}

\section{Combinatorial structure} \label{sec:comb}

We now consider a cubic vertex-transitive graph $X$ that has $\lambda = 1$ as a simple eigenvalue with eigenvector $\Zv$ whose entries are in $\{1, -1\}$. We define vertex sets $V^+$ and $V^-$ as in the previous section. In this section, we will extract more information about the combinatorial structure of $V^+$ and $V^-$ in the graph.

For $W \subseteq V(X)$, we use $X[W]$ to denote the subgraph of $X$ induced by $W$. Let $M$ denote the set of edges between $V^+$ and $V^-$; that is
\[
M =  \{e \in E(X) \, \mid \, e = xy \text{ for } x \in V^+ \text{ and } y \in V^-  \}.
\]

\begin{lemma}
\label{lem:gplusminus}
For $(V^+, V^-)$ and $M$ as defined above, the following statements are true:
\begin{enumerate}[\rm (i)]
\item $X[V^+]$ is the disjoint union of cycles of the same length;
\item $X[V^+]$ is isomorphic to $X[V^-]$ and $V^+$ and $V^-$ are blocks of imprimitivity of the action of $\Aut(X)$;
\item $\bigl\{V^+, V^-\bigr\}$ is the unique partition of $V(X)$, such that both parts induce 2-regular graphs;
\item $M$ is a perfect matching of $X$; and
\item $\Aut(X)$ acts arc-transitively on $M$ and fixes $M$ set-wise.
\end{enumerate}
\end{lemma}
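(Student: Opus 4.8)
My plan rests on two facts already available in the excerpt. First, Proposition~\ref{prop:pm1vector} applied with $k=3$ and $\lambda=1$ says that $X[V^+]$ and $X[V^-]$ are $2$-regular while the edges of $M$ form a $1$-regular bipartite subgraph between them. Second, the discussion preceding Theorem~\ref{thm:PS70} shows that, because $\lambda=1$ is \emph{simple}, every $P\in\Aut(X)$ satisfies $P\Zv=\pm\Zv$; hence $P$ preserves the partition $\{V^+,V^-\}$, either fixing both parts or interchanging them. Note also $V^+\ne\emptyset\ne V^-$, since $\lambda=1\ne 3$.

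I would dispose of (iv) first: a bipartite subgraph in which every vertex of $X$ meets exactly one edge is a perfect matching. For (i), $X[V^+]$ is $2$-regular, hence a disjoint union of cycles; to show these cycles have equal length, I would take $x,y\in V^+$ and any $P\in\Aut(X)$ with $P(x)=y$. Since $P(x)\in V^+$, $P$ cannot interchange $V^+$ and $V^-$, so it fixes $V^+$ setwise and restricts to an automorphism of $X[V^+]$ sending $x$ to $y$. Thus $X[V^+]$ is vertex-transitive, and a vertex-transitive $2$-regular graph is a disjoint union of cycles of a single common length; the same argument handles $X[V^-]$.

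For (ii), vertex-transitivity of $X$ gives an automorphism carrying some vertex of $V^+$ into the nonempty set $V^-$; such an automorphism interchanges the two parts, hence restricts to an isomorphism $X[V^+]\to X[V^-]$. That $V^+$ and $V^-$ are blocks of imprimitivity is just the assertion that $\{V^+,V^-\}$ is a nontrivial $\Aut(X)$-invariant partition of the transitive $\Aut(X)$-set $V(X)$, which is already in hand. For (iii), I would let $\{A,B\}$ be any partition of $V(X)$ into two $2$-regular induced subgraphs; then each vertex has exactly one neighbour across, so $\{A,B\}$ is equitable with quotient matrix $\pmat{2 & 1 \\ 1 & 2}$, whose eigenvalue $1$ has eigenvector $(1,-1)$. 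Lifting this to the $\pm1$ vector equal to $+1$ on $A$ and $-1$ on $B$ produces an eigenvector of $X$ for $\lambda=1$; by simplicity it equals $\pm\Zv$, forcing $\{A,B\}=\{V^+,V^-\}$.

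Finally, for (v): $M$ is exactly the set of edges meeting both parts of the $\Aut(X)$-invariant partition $\{V^+,V^-\}$, so $\Aut(X)$ fixes $M$ setwise. Since $M$ is a perfect matching, an arc of $M$ is determined by its tail, so the arcs of $M$ are in bijection with the vertices of $X$; as $\Aut(X)$ fixes $M$ and acts transitively on $V(X)$, it is transitive on the arcs of $M$. I do not anticipate a serious obstacle; the only two places requiring care are the dichotomy $P\Zv=\pm\Zv$ (which must invoke that the eigenspace is one-dimensional, not merely that $\Zv$ is an eigenvector) and the equitable-partition lift in (iii), which should be justified on its own rather than assumed a priori to reproduce $\Zv$.
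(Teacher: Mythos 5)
Your proof is correct and follows essentially the same route as the paper's: the local count at each vertex gives the $2$-regular/$1$-regular decomposition, simplicity of $\lambda=1$ forces uniqueness of the partition and hence its invariance under $\Aut(X)$, and transitivity then yields the isomorphism $X[V^+]\cong X[V^-]$, the equal cycle lengths, and arc-transitivity on $M$. You are in fact slightly more explicit than the paper at two points (the bijection between arcs of $M$ and vertices in (v), and the dichotomy $P\Zv=\pm\Zv$), but the argument is the same.
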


\begin{proof}
For every vertex $x\in V^+$, we have that $\sum_{y\sim x} \Zv(y) = \Zv(x) = 1$.
Since $\Zv(y)$ for all $y$ neighbours of $x$ are either $1$ or $-1$, it follows that $x$ is adjacent to two vertices in $V^+$ and one vertex in $V^-$. This implies that $M$ is a perfect matching of $X$ and $X[V^+]$ is a $2$-regular graph.

Any partition of $V(X)$ into sets $(V_1, V_2)$ such that the induced graphs $X[V_1]$ and $X[V_2]$ are $2$-regular gives rise to an eigenvector for $X$ with eigenvalue $1$, by taking the vector $\Zu$ defined as follows:
\[
\Zu(v) =
\begin{cases} ~1, & \text{if } v \in V_1, \\
-1, & \text{if } v \in V_2. \end{cases}
\]
Since $1$ is a simple eigenvalue of $X$, it follows that $\{V^-, V^+\}$ is the only such partition. Then every automorphism of $X$ must fix $V^+$ and $V^-$ or must swap $V^+$ and $V^-$ set-wise. This shows (v). Observe that there is an automorphism of $X$ taking a vertex of $V^+$ to a vertex in $V^-$. Such an automorphism must take every vertex in $V^+$ to a vertex in $V^-$ and every vertex in $V^-$ to a vertex in $V^+$ and so is an isomorphism from $X[V^+]$ to $X[V^-]$. This shows that (ii) holds. Since $\Aut(X)$ acts transitively on $X$, the induced action on $V^+$ is also transitive, so $X[V^+]$ is a vertex{sec:regmaps}-transitive $2$-regular graph. Then $X[V^+]$ must be a disjoint union of cycles of the same length.
\end{proof}

Lemma \ref{lem:gplusminus} motivates the question to classify cubic vertex-transitive graphs that admit a decomposition into a ``bipartite" 2-factor and a perfect matching, where both factors are invariant under the full automorphism group. Inspired by this problem, Alspach, Khodadadpour, and Kreher \cite{AlsKhoKre29} classified all cubic vertex-transitive graphs containing a Hamilton cycle that is invariant under the action of the automorphism group. In Section \ref{sec:families}, we classify the cases when $G[V^+]$ is a single cycle and when the cycles in $G[V^+]$ are triangles, respectively.

We may assume that each of $X[V^+]$ and $X[V^-]$ is the disjoint union of $m$ cycles of length $k$.
In this case we say that $X$ is of \emph{type $C(m,k)$}.

Let $C_i$ and $D_i$ $(i = 1, \ldots, m)$ be the cycles forming $X[V^+]$ and $X[V^-]$, respectively, as observed above. Let $G$ be the multigraph obtained from $X$ by contracting each cycle $C_i$ and each cycle $D_i$ to a single vertex. More precisely, $G$ has $2m$ vertices $c_1,\dots,c_m$ and $d_1,\dots,d_m$ (one for each cycle $C_i$ or $D_i$); there is an edge joining $c_i$ and $d_r$ in $G$ for each edge of $X$ joining a vertex in $C_i$ to a vertex in $D_r$. We say that $G$ is the \emph{contracted multigraph} of $X$. Observe that $G$ is a $k$-regular connected graph.

\begin{lemma}\label{lem:emb-auts}
If $G$ is the contracted multigraph of $X$, then $\Aut(X) \leq \Aut(G)$ and any vertex-transitive subgroup of $\Aut(X)$ acts transitively on the arcs of $G$. In particular, $G$ is arc-transitive and bipartite.
\end{lemma}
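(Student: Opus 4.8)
The plan is to build the natural map $\Phi\colon\Aut(X)\to\Aut(G)$ induced by the action on cycles, check that it is an injective homomorphism of multigraphs, and then transfer arc-transitivity from the perfect matching $M$ to $G$.

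First I would invoke Lemma~\ref{lem:gplusminus}: $M$ is a perfect matching and $\Aut(X)$ fixes $M$ set-wise, so every $\alpha\in\Aut(X)$ preserves $E(X)\setminus M$, which is exactly the edge set of the disjoint union of the $2m$ $k$-cycles $C_1,\dots,C_m,D_1,\dots,D_m$. Hence $\alpha$ permutes these cycles and thereby induces a permutation $\bar\alpha$ of $V(G)=\{c_1,\dots,c_m,d_1,\dots,d_m\}$. To see $\bar\alpha\in\Aut(G)$, I would fix once and for all the $\Aut(X)$-equivariant bijection between $M$ and $E(G)$ that sends a matching edge $xy$, with $x$ in some $C_i$ and $y$ in some $D_r$, to the edge of $G$ joining $c_i$ and $d_r$; since $\alpha$ permutes $M$, the resulting permutation of $E(G)$ is compatible with $\bar\alpha$ on vertices and respects incidences, so $\bar\alpha$ is an automorphism of the multigraph $G$. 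The assignment $\alpha\mapsto\bar\alpha$ is visibly a homomorphism. For injectivity, suppose $\bar\alpha$ is the identity of $G$: then $\bar\alpha$ fixes every $c_i$, so $\alpha$ cannot interchange $V^+$ and $V^-$ and maps each $C_i$ and each $D_r$ to itself set-wise; moreover $\bar\alpha$ fixes every edge of $G$, so $\alpha$ fixes every matching edge $xy$ as an edge, and then $\{\alpha(x),\alpha(y)\}=\{x,y\}$ with $\alpha(x)\in V^+$, $\alpha(y)\in V^-$ forces $\alpha(x)=x$ and $\alpha(y)=y$. As every vertex lies in a matching edge, $\alpha=\mathrm{id}$, so $\Aut(X)\le\Aut(G)$.

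For arc-transitivity, let $H\le\Aut(X)$ be transitive on $V(X)$. Then $H$ is transitive on the arcs of $M$: given an arc $(x,xy)$ and an arc $(x',x'y')$ with $xy,x'y'\in M$, choose $\alpha\in H$ with $\alpha(x)=x'$; since $\alpha$ preserves $M$ and each vertex lies in a unique edge of this perfect matching, $\alpha(xy)=x'y'$, hence $\alpha(y)=y'$, so $\alpha$ carries the first arc to the second. Now the correspondence sending an arc $(x,xy)$ of $M$ to the arc of $G$ formed by the $G$-vertex for the cycle containing $x$ together with the $G$-edge corresponding to $xy$ is a $\Phi$-equivariant bijection between the arcs of $M$ and the arcs of $G$; transitivity of $H$ on the former therefore yields transitivity of $\Phi(H)\le\Aut(G)$ on the latter, so $G$ is arc-transitive. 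Finally $G$ is bipartite: every edge of $G$ comes from a matching edge with one end in $V^+$ (hence in some $C_i$) and the other in $V^-$ (hence in some $D_r$), so it joins $\{c_1,\dots,c_m\}$ to $\{d_1,\dots,d_m\}$, and these two sets form a bipartition.

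The bookkeeping here is routine; the one point to be careful about is the multigraph formalism, namely treating automorphisms of $G$ as permuting parallel edges and not merely vertices, and reading ``arc of $G$'' as an edge equipped with an orientation. This is what makes the bijection $M\leftrightarrow E(G)$ the right tool for simultaneously establishing $\bar\alpha\in\Aut(G)$, the injectivity of $\Phi$, and the matching-up of arcs; without it one only obtains a homomorphism into the vertex-permutation group, which is too coarse to give either injectivity or arc-transitivity.
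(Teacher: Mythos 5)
Your proof is correct and follows essentially the same route as the paper: automorphisms of $X$ permute the cycles $C_i, D_j$ (since they form blocks of imprimitivity), hence descend to $G$, and the $\Aut(X)$-equivariant bijection between $M$ and $E(G)$ transfers arc-transitivity, with bipartiteness immediate from the $V^+/V^-$ structure. Your treatment is somewhat more careful than the paper's, notably in verifying injectivity of the induced map $\Aut(X)\to\Aut(G)$ (which the paper asserts implicitly) and in handling the multigraph formalism for parallel edges, but these are elaborations of the same argument rather than a different approach.
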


\begin{proof}
Consider any $\alpha \in \Aut(X)$. If we take the contracted multigraph of $\alpha(X)$, we again obtain $G$, since the cycles $C_i, D_j$ form blocks of imprimitivity under the action of $\Aut(X)$. Thus, $\alpha$ acts on $G$ as an automorphism, and so $\Aut(X) \leq \Aut(G)$. Let $\Gamma \leq \Aut(X)$ act transitively on the vertices of $X$. Then $\Gamma$ acts arc-transitively on the edges of $M$, which are in one-to-one correspondence with the edges of $G$, and thus acts arc-transitively on $G$. Clearly every edge in $G$ connects some $c_i$ to some $d_j$, so $G$ is bipartite.
\end{proof}

\section{Relation to regular maps}\label{sec:regmaps}

In this section, we explore the relation between the combinatorial structure given by the eigenvector of $1$, as a simple eigenvalue, in a cubic vertex-transitive graph and the combinatorial structure of graph embeddings with a high degree of symmetry. In this setting, we can obtain from a regular map a cubic vertex-transitive graph with $1$ as an eigenvalue.

First, we proceed with some preliminary definitions from the area of graph embeddings. Further details may be found in \cite{MT01}. Let $G$ be a connected multigraph. For each $v \in V(G)$, let $\pi_v$ be a cyclic permutation of the edges incident to $v$. Then $\Pi = \{\pi_v \mid v\in V(G)\}$ is said to be a \emph{rotation system} for $G$. An automorphism $\alpha$ of $G$ is said to \emph{preserve the rotation system} if it maps the local rotation around each vertex $v$ onto the local rotation of $\alpha(v)$. The subgroup of $\Aut(X)$ that preserves the rotation system $\Pi$ is called the \emph{automorphism group of the graph with rotation system}, and is denoted by $\Aut(G,\Pi)$.

Each rotation system describes an \emph{embedding} of $G$ on an orientable surface: it defines a collection of closed walks, called \emph{facial walks} or \emph{faces}, such that each edge is traversed once in each direction by these walks, and by pasting discs onto each facial walk, we obtain a \emph{map}, i.e. an orientable surface in which $G$ is 2-cell embedded. Thus we view the pair $(G,\Pi)$ as a map, and we call $\Aut(G,\Pi)$ the group of \emph{map automorphisms} corresponding to the map determined by the rotation system $\Pi$.
A map is said to be \emph{orientably regular} (or \emph{rotary} \cite{Wilson}) if $\Aut(G,\Pi)$ acts transitively on the arcs of $G$.

Let $X$ be a cubic graph of type $C(m,k)$.
Suppose that $\Gamma$ is a subgroup of $\Aut(X)$ that acts transitively on $V(X)$. Recall that $V^+$ and $V^-$ are blocks of imprimitivity of $\Aut(X)$ and hence also of $\Gamma$. Similarly, the cycles $C_1,\dots,C_m$ and $D_1,\dots,D_m$ form a system of blocks of imprimitivity. The stabilizer of $C_1$ in $\Gamma$ (the subgroup of all elements of $\Gamma$ that fix $C_1$) acts on the cycle $C_1$ either as a cyclic group or as a dihedral group. In this section we shall assume that the action is regular:

\begin{itemize}
\item[\bf (A1)]
The stabilizer of $C_1$ in $\Gamma$ acts regularly on $C_1$.
\end{itemize}

Note that {\bf (A1)} implies that the stabilizer of $C_1$ in $\Gamma$ preserves the orientation of $C_1$ and acts on $C_1$ regularly as the cyclic group $\ints_k$.

Let $C_1=v_1v_2\dots v_k$. Suppose that $D_1=w_1w_2\dots w_k$ is chosen so that $v_1w_1\in E(X)$.
For $i=1,\dots, m$, let $\gamma_i\in \Gamma$ be an automorphism that maps $C_1$ to $C_i$, and let $\delta_i\in \Gamma$ be an automorphism that maps $C_1$ to $D_i$. Moreover, we may assume that $\gamma_1$ rotates $C_1$ clockwise by one vertex, i.e. $\gamma_0(v_j)=v_{j+1}$ for $j=1,\dots,k$ (indices taken modulo $k$) and that $\delta_1$ maps $v_j$ to $w_j$ for $j=1,\dots,k$.

\begin{theorem}
\label{thm:regular maps 1}
If\/ $\Gamma$ satisfies {\bf (A1)}, then $\Gamma$ acts regularly on $V(X)$ and hence $X$ is a Cayley graph of the group generated by $\gamma_1$ and the involution $\delta_1$.
\end{theorem}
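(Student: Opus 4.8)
The plan is to pin down $|\Gamma|$ by a counting argument, conclude that $\Gamma$ acts regularly on $V(X)$, and then show that the two-generator subgroup $\langle\gamma_1,\delta_1\rangle$ is already all of $\Gamma$. For the counting step: the $2m$ cycles $C_1,\dots,C_m,D_1,\dots,D_m$ form a system of blocks of imprimitivity for $\Gamma$ (as recalled just before {\bf (A1)}), and since $\Gamma$ is vertex-transitive it permutes these $2m$ blocks transitively. By orbit--stabilizer, $|\Gamma| = 2m\cdot|\Gamma_{C_1}|$, where $\Gamma_{C_1}$ is the setwise stabilizer of $C_1$. Assumption {\bf (A1)} says $\Gamma_{C_1}$ acts regularly on the $k$ vertices of $C_1$, hence $|\Gamma_{C_1}|=k$ and $|\Gamma|=2mk=|V(X)|$, the last equality because $X$ has type $C(m,k)$. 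A transitive permutation group whose order equals the number of points is regular, so $\Gamma$ acts regularly on $V(X)$; in particular every point stabilizer in $\Gamma$ is trivial.

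Next I would verify that $\delta_1$ is an involution. By Lemma~\ref{lem:gplusminus}, $M$ is a perfect matching that is fixed set-wise by $\Aut(X)$, and $v_1w_1\in M$. Since $\delta_1(v_1)=w_1$, the image $\delta_1(v_1w_1)=w_1\delta_1(w_1)$ is again an edge of $M$; as the unique $M$-edge incident with $w_1$ is $w_1v_1$, we get $\delta_1(w_1)=v_1$, so $\delta_1^2$ fixes $v_1$ and therefore $\delta_1^2=\mathrm{id}$ by the regularity just established.

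Finally I would show $\Gamma=\langle\gamma_1,\delta_1\rangle=:H$. Let $O$ be the $H$-orbit of $v_1$. The three neighbours of $v_1$ in $X$ are its two neighbours on the cycle $C_1$, namely $\gamma_1(v_1)$ and $\gamma_1^{-1}(v_1)$, together with its matching partner $w_1=\delta_1(v_1)$; all three lie in $O$. Now for an arbitrary $x\in O$, write $x=h(v_1)$ with $h\in H$; because $h$ is a graph automorphism, the neighbours of $x$ are the images under $h$ of the neighbours of $v_1$, hence lie in $h(O)=O$. Thus $O$ is closed under taking neighbours, and since $X$ is connected this forces $O=V(X)$, i.e.\ $H$ is transitive on $V(X)$. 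As $H\le\Gamma$ and $\Gamma$ is regular, $H$ acts freely, so $|H|=|O|=|V(X)|=|\Gamma|$ and therefore $H=\Gamma$. Hence $\Gamma=\langle\gamma_1,\delta_1\rangle$ is a regular subgroup of $\Aut(X)$, which is exactly the assertion that $X$ is a Cayley graph of $\langle\gamma_1,\delta_1\rangle$, with connection set corresponding to $\{\gamma_1,\gamma_1^{-1},\delta_1\}$.

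The step that needs the most care is the last one: upgrading transitivity of the full group $\Gamma$ to transitivity of the two-generator subgroup $H$. The ``closed under neighbours'' observation handles it, and it relies on the fact that a vertex together with its two cycle-edges and its single matching-edge accounts for its entire neighbourhood. Everything else --- that the cycles form a block system, that {\bf (A1)} forces $|\Gamma_{C_1}|=k$ with the induced action being cyclic rotation, and that a regular automorphism subgroup yields a Cayley description --- is routine.
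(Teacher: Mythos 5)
Your proof is correct, and it is both somewhat different from and more complete than the paper's. For the regularity of $\Gamma$ you use a counting argument: the $2m$ cycle-blocks are permuted transitively, orbit--stabilizer gives $|\Gamma|=2m\,|\Gamma_{C_1}|=2mk=|V(X)|$ by {\bf (A1)}, and a transitive group of order equal to the number of points is regular. The paper instead argues pointwise: an element fixing a vertex $v$ is conjugate (via any $\alpha_v$ sending $v_1$ to $v$) to an element fixing $v_1$, which lies in the stabilizer of the block $C_1$ and hence is trivial because that stabilizer acts freely on $C_1$. The two arguments are equivalent in substance --- both reduce to {\bf (A1)} forcing trivial point stabilizers --- but yours gives the order of $\Gamma$ explicitly, while the paper's avoids any counting. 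Where you genuinely add value is in the second half: the paper stops at regularity and simply asserts the Cayley-graph conclusion, whereas you verify the two facts it leaves implicit, namely that $\delta_1$ is an involution (via invariance of the perfect matching $M$ from Lemma~\ref{lem:gplusminus}(iv)--(v) plus freeness of the action) and that $\langle\gamma_1,\delta_1\rangle$ is all of $\Gamma$ (the orbit of $v_1$ under this subgroup is closed under taking neighbours, so connectivity of $X$ makes the subgroup transitive, and a transitive subgroup of a regular group is the whole group). This last step is exactly the connectedness-of-the-Cayley-graph argument identifying the connection set as $\{\gamma_1,\gamma_1^{-1},\delta_1\}$, and it is the part a careful reader would most want spelled out.
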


\begin{proof}
The group $\Gamma$ acts transitively on $V(X)$. To see that it is a Cayley graph it suffices to show that its action is regular (no fixed points). So, suppose that $\gamma\in\Gamma$ fixes a vertex $v$. Let $\alpha_v\in\Gamma$ be a group element that maps $v_1$ to $v$. Then $\alpha_v^{-1}\gamma\alpha_v$ fixes $v_1$ and by {\bf (A1)}, it must be the identity automorphism. This implies that $\gamma$ is the identity. This conclusion confirms the claim.
\end{proof}

\begin{theorem}
\label{thm:regular maps 2}
If\/ $\Gamma$ satisfies {\bf (A1)}, then the contracted multigraph $G$ of $X$ admits a rotation system $\Pi$ such that
$\Gamma \le \Aut(G,\Pi)$. The group $\Gamma$ acts arc-transitively on $(G,\Pi)$ and therefore $(G,\Pi)$ is an orientably regular map.
\end{theorem}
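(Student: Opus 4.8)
The plan is to build the rotation system $\Pi$ directly out of the cyclic structure of the cycles $C_i$ and $D_i$, then to check $\Gamma$-invariance by a short conjugation argument, and finally to read off arc-transitivity from Lemma~\ref{lem:emb-auts}.

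For the construction: recall that the $k$ edges of $G$ incident with the vertex $c_i$ are in bijection with the $k$ vertices of $C_i$ (to $v\in C_i$ corresponds the unique edge of $M$ incident with $v$). Since $\Gamma$ is transitive on $V(X)$, it is transitive on the system of blocks $C_1,\dots,C_m,D_1,\dots,D_m$, so we may pick $\gamma_i\in\Gamma$ with $\gamma_i(C_1)=C_i$; then $C_i=\gamma_i(v_1)\gamma_i(v_2)\cdots\gamma_i(v_k)$, and I would let $\pi_{c_i}$ send the edge of $M$ at $\gamma_i(v_j)$ to the edge of $M$ at $\gamma_i(v_{j+1})$ (indices modulo $k$). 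Define $\pi_{d_i}$ the same way using $\delta_i\in\Gamma$ with $\delta_i(C_1)=D_i$. Each $\pi_u$ is a single $k$-cycle on the edges at $u$, so $\Pi=\{\pi_u\mid u\in V(G)\}$ is a rotation system, and hence $(G,\Pi)$ is a map.

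The point that needs care — and, I expect, the main obstacle — is that $\Pi$ is well-defined, i.e.\ independent of the choices of $\gamma_i$ and $\delta_i$. If $\gamma_i'$ also satisfies $\gamma_i'(C_1)=C_i$, then $\gamma_i^{-1}\gamma_i'$ stabilizes $C_1$, so by {\bf (A1)} it acts on $C_1$ as a power of the rotation $\gamma_1$; thus $\gamma_i'(v_j)=\gamma_i(v_{j+t})$ for a fixed $t$ and all $j$, and $\gamma_i'$ yields exactly the same cyclic order of the edges at $c_i$, only with a shifted base point, so $\pi_{c_i}$ is unchanged. The analogous statement holds at each $d_i$. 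This is precisely where {\bf (A1)} is essential: if the stabilizer of $C_1$ acted dihedrally it could also act on $C_1$ by a reflection, reversing the cyclic order and leaving $\pi_{c_i}$ ambiguous.

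It remains to verify $\Gamma\le\Aut(G,\Pi)$ and then conclude. We already have $\Gamma\le\Aut(X)\le\Aut(G)$ by Lemma~\ref{lem:emb-auts}, so I need only check that each $\alpha\in\Gamma$ carries the rotation at $u$ onto the rotation at $\alpha(u)$. For $u=c_i$ with, say, $\alpha(C_i)=C_j$, the element $\gamma_j^{-1}\alpha\gamma_i$ stabilizes $C_1$, so by {\bf (A1)} it equals $\gamma_1^t$ for some $t$; hence $\alpha$ sends the edge of $M$ at $\gamma_i(v_s)$ to the edge at $\gamma_j(v_{s+t})$ for every $s$, and comparing with the definitions of $\pi_{c_i}$ and $\pi_{c_j}$ gives $\pi_{c_j}(\alpha(e))=\alpha(\pi_{c_i}(e))$ for every edge $e$ at $c_i$. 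The cases $\alpha(C_i)=D_j$ and $u=d_i$ are identical with the relevant $\delta$'s in place of the $\gamma$'s. Finally, since $\Gamma$ is vertex-transitive on $X$, Lemma~\ref{lem:emb-auts} shows that it is transitive on the arcs of $G$; as $\Gamma\le\Aut(G,\Pi)$, the group $\Aut(G,\Pi)$ is arc-transitive on $G$, so $(G,\Pi)$ is an orientably regular map.
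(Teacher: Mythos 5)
Your proposal is correct and follows essentially the same route as the paper: both define the rotation at each contracted vertex via the image of the fixed orientation of $C_1$ under the chosen $\gamma_i$ or $\delta_i$, use the fact that {\bf (A1)} forces the stabilizer of $C_1$ to act as the orientation-preserving cyclic group to establish well-definedness and $\Gamma$-invariance by the same conjugation argument, and then invoke Lemma~\ref{lem:emb-auts} for arc-transitivity. Your write-up is, if anything, slightly more explicit about where {\bf (A1)} is used.
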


\begin{proof}
Fix an orientation of $C_1$ and orient each $C_i$ and $D_i$ according to the orientation induced by $\gamma_i(C_1)$ and $\delta_i(C_1)$, respectively. We claim that for each $\gamma\in\Gamma$, the orientation of the cycle $\gamma(C_1)$ is the same as the one defined above. Let us give the argument for the case when $\gamma(C_1) = D_i = \delta_i(C_1)$. In that case, $\delta_i^{-1} \gamma(C_1)$ fixes $C_1$ and hence by {\bf (A1)} fixes the orientation of $C_1$. This implies that $\gamma$ and $\delta_i$ must induce the same orientation on $D_i$, which we were to prove.

The orientations of cycles determine a rotation system on the contracted multigraph $G$ of $X$, and as shown above, $\Gamma$ preserves the rotation around the vertex corresponding to $C_1$. Suppose that there is $\gamma\in\Gamma$ that does not preserve one of the rotations, say it maps the rotation around $C_i$ onto the opposite rotation around $D_j$. (The proof of other cases is similar.) In that case, $\gamma \gamma_i$ maps $C_1$ onto $D_j$ with the opposite rotation as $\delta_j$, which contradicts what we have proved above. This completes the proof.
\end{proof}

A special case when $m=1$ gives rise to regular embeddings of the 2-vertex contracted multigraph (with $k$ parallel edges). This case will be treated in a somewhat greater generality in Section \ref{sect:genPetersen}.

\begin{figure}[htbp]
  \centering
  \includegraphics[scale=1]{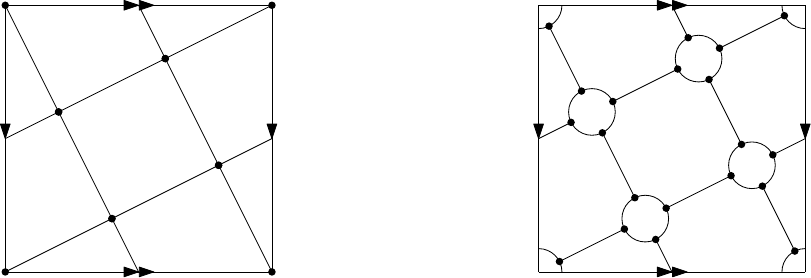}
  \caption{An example of a regular map and its vertex truncation (on the right).   \label{fig:k5}}
\end{figure}

Given a graph embedding $(G,\Pi)$, we define the \textsl{vertex truncation} of  $(G,\Pi)$, denoted $T(G,\Pi)$ as follows: the vertices of $T(G,\Pi)$ are all incident pairs $(v,e)$ for $v \in V(G)$ and $e \in E(G)$ such that $v$ is incident to $e$. Two vertices of $T(G,\Pi)$, say $(v,e)$ and $(w,f)$ are adjacent if $e=f$ or if $v = w$ and $\pi_v(e) = f$. Roughly speaking, we obtain $T(G,\Pi)$ from $G$ by replacing each vertex of $G$ with a cycle determined by $\Pi$. Figure \ref{fig:k5} shows an example a vertex truncatio; on the left side of the figure, we have the complete graph $K_5$ embedded in the torus as a regular map and, on the right, we have the vertex truncation of this embedding.

\begin{lemma} If $G$ is a bipartite, arc-transitive graph and $(G, \Pi)$ is an embedding of $G$ on some orientable surface, then the vertex truncation   $T(G,\Pi)$ is a vertex-transitive cubic which has $1$ as an eigenvalue.  \end{lemma}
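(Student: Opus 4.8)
The plan is to establish the three assertions—that $T=T(G,\Pi)$ is cubic, that $1$ is an eigenvalue of $T$, and that $T$ is vertex-transitive—more or less separately; bipartiteness of $G$ is what produces the eigenvalue, while arc-transitivity is what produces vertex-transitivity. First, cubicity: being connected and arc-transitive, $G$ is $k$-regular for some $k$, and since we want $T$ to be cubic we are in the case $k\ge 3$. A vertex $(v,e)$ of $T$ has exactly the three neighbours $(u,e)$, where $u$ is the endpoint of $e$ other than $v$ (this is well defined since $G$ is bipartite, hence loopless), and $(v,\pi_v(e))$ and $(v,\pi_v^{-1}(e))$; as $\pi_v$ is a cyclic permutation of order $k\ge 3$ these are pairwise distinct, so $T$ is $3$-regular (and connected, since $G$ is).

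For the eigenvalue, let $V(G)=V_0\cup V_1$ be the bipartition of $G$, and define $\Zv\in\re^{V(T)}$ by $\Zv(v,e)=1$ if $v\in V_0$ and $\Zv(v,e)=-1$ if $v\in V_1$. Take a vertex $(v,e)$ with $v\in V_0$: its neighbour $(u,e)$ has $u\in V_1$, so value $-1$, while the two neighbours $(v,\pi_v(e))$ and $(v,\pi_v^{-1}(e))$ have first coordinate $v\in V_0$, so value $+1$; hence the neighbour-sum at $(v,e)$ equals $1+1-1=1=\Zv(v,e)$, and the reversed computation settles the case $v\in V_1$. Thus $A(T)\Zv=\Zv$ with $\Zv\ne 0$, so $1$ is an eigenvalue of $T$. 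Equivalently, the partition of $V(T)$ induced by $V_0$ and $V_1$ has both parts inducing $2$-regular subgraphs with a perfect matching between them, and Proposition~\ref{prop:pm1vector} yields $\lambda=3-2\cdot 1=1$; under the contraction this is exactly the partition $\{V^+,V^-\}$ of Section~\ref{sec:comb} read backwards.

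For vertex-transitivity, the key observation is that $V(T)$ is in natural bijection with the set of arcs of $G$, the vertex $(v,e)$ corresponding to the arc leaving $v$ along $e$. Any automorphism $\alpha$ of $G$ that preserves $\Pi$ induces the permutation $(v,e)\mapsto(\alpha(v),\alpha(e))$ of $V(T)$, and this permutation preserves both kinds of adjacency—the ``edge'' adjacency because $\alpha$ is a graph automorphism, the ``rotation'' adjacency because $\alpha$ preserves $\Pi$—so it lies in $\Aut(T)$. Hence $\Aut(G,\Pi)\le\Aut(T)$, acting on $V(T)$ exactly as it acts on the arcs of $G$; so if $\Aut(G,\Pi)$ (or a subgroup of it) acts transitively on the arcs of $G$, then $\Aut(T)$ acts transitively on $V(T)$. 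This is the sense in which the arc-transitivity hypothesis is used, in the spirit of Theorem~\ref{thm:regular maps 2}.

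The step I expect to be the main obstacle is precisely this last one—making the arc-transitivity hypothesis do its job. The eigenvalue computation uses only that $G$ is bipartite, whereas for $T(G,\Pi)$ to be vertex-transitive one genuinely needs an arc-transitive group of automorphisms of $G$ that preserves the chosen rotation system $\Pi$; equivalently, and this is the cleanest way to phrase it, one wants $(G,\Pi)$ to be an orientably regular—or at least an arc-transitive—map. When $k\ge 4$ the graph $T(G,\Pi)$ genuinely depends on $\Pi$, and a generic rotation system on an arc-transitive graph need not be preserved by any arc-transitive group, so this is the assumption that must be in force. In every application of the lemma—in particular whenever $G$ and $\Pi$ arise as the contracted multigraph and rotation system of a cubic vertex-transitive graph—Lemma~\ref{lem:emb-auts} and Theorem~\ref{thm:regular maps 2} supply such a group, so the hypothesis holds.
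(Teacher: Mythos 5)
Your argument follows the paper's proof almost verbatim: the same description of the three neighbours of $(v,e)$ giving cubicity, and the same $\pm1$ vector supported on the two sides of the bipartition of $G$ giving the eigenvalue $1$. The one place you diverge is the vertex-transitivity step, and your caution there is justified: the paper's own proof simply asserts that arc-transitivity of $G$ makes $\Aut(G)$ act transitively on $V(T(G,\Pi))$ ``preserving adjacencies,'' which is exactly the point you identify as problematic. An automorphism of $G$ induces an automorphism of $T(G,\Pi)$ only if it carries the rotation $\pi_v$ to $\pi_{\alpha(v)}^{\pm1}$ at every vertex, so for vertex degree $k\ge 4$ (where $T(G,\Pi)$ genuinely depends on $\Pi$) one needs an arc-transitive group preserving the rotation system --- i.e.\ the map $(G,\Pi)$ must be orientably regular, not merely the graph $G$ arc-transitive. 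For $k=3$ the two cyclic orders at a vertex give the same triangle, so the issue vanishes, and in every use the paper makes of this lemma (the regular maps of Appendix~\ref{sec:app-computation}, and the contracted multigraphs via Theorem~\ref{thm:regular maps 2}) the stronger hypothesis holds; your reading of the lemma with that hypothesis in force is the correct one, and your proof under it is complete.
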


\begin{proof}
Each vertex $(v,e)$ of  $T(G,\Pi)$ is adjacent to exactly three vertices: $(v, \pi_v(e))$, $(v, \pi_v^{-1}(e))$ and $(w, e)$, where $e =vw$. Since $G$ is an arc-transitive graph, the automorphism group also acts transtively on the vertices of $T(G, \Pi)$, preserving adjacencies in $T(G, \Pi)$. Thus $T(G, \Pi)$ is a cubic vertex-transitive graph.

Let $(A,B)$ be the bipartition of $G$. We will partition the vertices of $T(G, \Pi)$ into $A' \cup B'$ as follows: let
\[ A' = \{ (v,e) \,\mid\, v \in A \} \text{ and } B' = \{ (u,f) \,\mid\, u \in B \}.
\]
We note that each vertex $(v,e=vw)$  of $A'$ has two neighbours, $(v, \pi_v(e))$ and $(v, \pi_v^{-1}(e))$, in $A'$ and one neighbour $(w, e)$ in $B'$. Similarly, each vertex of $B'$ has two neighbours in $B'$ and one neighbour in $A'$. Thus the vector which takes value $1$ for each vertex of $A'$ and value $-1$ for each vertex of $B'$ will be an eigenvector for $A(T(G,\Pi))$ with eigenvalue $1$.
\end{proof}

We note that the vertex truncation of a bipartite arc-transitive graph does not necessarily have $1$ as a simple eigenvalue, but such graphs are good candidates for having $1$ as a simple eigenvalue because they have a $\pm1$ eigenvector for eigenvalue $1$, like the vertex-transitive cubic graphs with $1$ as a simple eigenvalue. The example in Figure \ref{fig:k5} does not have $1$ as a simple eigenvalue. We note that if a graph embedding $(G,\Pi)$ is a regular map, then $G$ is arc-transitive.

In Appendix \ref{sec:app-computation}, we construct the vertex truncations of small regular maps from the census of Marston Conder \cite{Con94, Conders} and check when $1$ is indeed a simple eigenvalue. For example, the M\"{o}bius-Kantor graph has an embedding in the double torus which is a regular map; this embedding has with six octagonal faces and the full automorphism group, whose order is $96$, of the graph can be realized as the automorphism group of the map \cite{Co50}. We computed that the vertex truncation of this embedding of the M\"{o}bius-Kantor graph has $1$ as a simple eigenvalue, see Table \ref{tab:regmaps}.

\section{Families of graphs}
\label{sec:families}

In this section we show the existence of several infinite families of cubic vertex-transitive graphs with $1$ as a simple eigenvalue. While it is not difficult to find infinite families of such graphs with $1$ as an eigenvalue, it is often difficult to determine that $1$ is a simple eigenvalue. For these family, we have the characterization of when $1$ is simple, using some number theoretic methods, as well as results about the sum of cosine, resulting from vanishing sums of roots of unity.

\subsection{Cubic multigraphs (type $C(m,2)$)}

Let us first discuss the case when $X$ contains multiple edges. When there is a triple edge, we have a two-vertex graph, denoted $K_2^3$ in Figure \ref{fig:F2n cover}, whose eigenvalues are $\pm 3$. Otherwise, there are only double edges and single edges. Since $X$ is vertex-transitive, every vertex $x$ has two neighbors, one is joined to $x$ by a double edge, the other one by a single edge.  It follows that $X$ is obtained from an even cycle $C_{2n}$ by adding an edge in parallel to every second edge on the cycle.
Let us denote this graph by $F_{2n}$.

\begin{figure}[htbp]
  \centering
  \includegraphics[width=8cm]{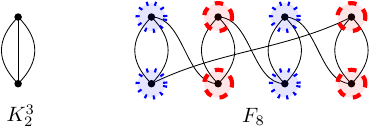}
  \caption{$F_{8}$, \textit{left}, is a regular cyclic cover of $K_2^3$, \textit{right}; the top four vertices of $F_8$ forms the fibre corresponding to the top vertex of $K_2^3$ and, likewise, the bottom vertices of $F_8$ forms the fibre corresponding to the bottom vertex of $K_2^3$. The partition given by the blue and red decorations on the vertices is the partition into $V^+$ and $V^-$}
  \label{fig:F2n cover}
\end{figure}

This graph has unique partition into sets $V^+$ and $V^-$ as in Lemma \ref{lem:gplusminus}; the induced graphs on $V^+$ and $V^-$ consist of $m$ digons each, thus $n$ is even and $X = F_{4m}$. Figure \ref{fig:F2n cover} shows this partition for $F_8$. Therefore, it has a unique $\pm1$ eigenvector for eigenvalue $\lambda=1$. It is a simple exercise to exclude eigenvectors for $\lambda=1$ that are not multiples of this one. Instead of doing this, we note that $F_{2n}$ is a regular cyclic cover over the 2-vertex graph with triple edge, $K_2^3$ (see Figure \ref{fig:F2n cover}). Therefore its eigenvalues are (see \cite{KwakLee92} or \cite{MoTR15} for details) the union of eigenvalues of $n$ matrices of the form
$$
    A_j = \pmat{0 & 2+\exp(\tfrac{2\pi {\rm i} j}{n}) \\ 2+\exp(\tfrac{-2\pi {\rm i} j}{n}) & 0}, \qquad j=0,1,\dots,n-1.
$$
The matrix $A_j$ has eigenvalue 1 if and only if $n$ is even and $j=n/2$. This gives the following result.

\begin{proposition}
The eigenvalues of the graph $F_{2n}$ are $$\pm\sqrt{4 \cos(2\pi j/n) + 5}, \quad j=0,1,\dots,n-1.$$
Thus $\lambda=1$ is an eigenvalue if and only if $n$ is even, in which case this is a simple eigenvalue.
\end{proposition}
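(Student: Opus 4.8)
The plan is to diagonalise each of the $n$ blocks $A_j$ and then assemble the spectrum. Write $\theta_j = 2\pi j/n$ and $z_j = 2 + \exp({\rm i}\theta_j)$, so that $A_j = \pmat{0 & z_j \\ \comp{z_j} & 0}$. Its characteristic polynomial is $t^2 - z_j\comp{z_j} = t^2 - |z_j|^2$, hence the eigenvalues of $A_j$ are $\pm|z_j|$. Expanding, $|z_j|^2 = (2+\cos\theta_j)^2 + \sin^2\theta_j = 5 + 4\cos\theta_j$, so $A_j$ has eigenvalues $\pm\sqrt{5 + 4\cos(2\pi j/n)}$. Taking the union over $j = 0,1,\dots,n-1$ --- which, as recalled just before the statement, is exactly the spectrum of the cyclic cover $F_{2n}$ --- gives the list in the statement.

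For the second part, the value $1$ lies in this list if and only if $5 + 4\cos(2\pi j/n) = 1$ for some $j \in \{0,\dots,n-1\}$, i.e. if and only if $\cos(2\pi j/n) = -1$. This happens exactly when $2\pi j/n$ is an odd multiple of $\pi$, i.e. when $2j = n$; such a $j$ exists in the required range precisely when $n$ is even, and then it is unique, namely $j = n/2$.

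It remains to check simplicity when $n$ is even. For $j \ne n/2$ we have $\cos(2\pi j/n) > -1$, so $5 + 4\cos(2\pi j/n) \in (1, 9]$ and the two eigenvalues $\pm\sqrt{5+4\cos(2\pi j/n)}$ of $A_j$ lie in $(1,3] \cup [-3,-1)$; in particular neither equals $1$. Hence the eigenvalue $1$ of $F_{2n}$ comes only from the block $A_{n/2} = \pmat{0 & 1 \\ 1 & 0}$, which contributes it with multiplicity one. Therefore $1$ is a simple eigenvalue of $F_{2n}$.

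The whole argument is routine linear algebra once the cyclic-cover reduction to the matrices $A_j$ is granted; the only spot needing slight care is the last step, where one must rule out a second occurrence of the eigenvalue $1$ --- both from another block $A_j$ and from a possible repeated eigenvalue inside $A_{n/2}$ --- and this is settled by the strict inequality $5 + 4\cos(2\pi j/n) > 1$ for $j \ne n/2$ together with the explicit form of $A_{n/2}$.
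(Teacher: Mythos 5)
Your proof is correct and follows the same route as the paper: both diagonalise the blocks $A_j$ coming from the cyclic-cover decomposition to get the eigenvalues $\pm\sqrt{5+4\cos(2\pi j/n)}$, and then observe that the value $1$ arises only from $j=n/2$. You merely spell out the ``short computation'' and the simplicity check (ruling out a second occurrence of $1$ from other blocks or from within $A_{n/2}$) in more detail than the paper does.
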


\begin{proof}
A short computation shows that the eigenvalues of the matrix $A_j$ are $$\pm\sqrt{4 \cos(2\pi j/n) + 5}.$$
Such an eigenvalue is equal to 1 if and only if $\cos(2\pi j/n) = -1$, i.e. $j=n/2$.
\end{proof}

\subsection{Truncations of cubic arc-transitive graphs (type $C(m,3)$)}
\label{subsec:truncation}

The \emph{truncation} of a cubic multigraph $G$ is a cubic graph $T(G)$ where every vertex $v$ of $G$ corresponds to a triangle and every edge $uv$ of $G$ gives an edge of $T(G)$ between the triangles corresponding to $u$ and $v$. Figure \ref{fig:truncs} shows two examples of truncations of cubic graphs. The truncation is isomorphic to the line graph of the subdivision of $G$. Thus, the eigenvalues of $T(G)$ are easy to compute from the eigenvalues of $G$ (see \cite{CDS95} or \cite[Theorem 2.1]{Zh09}).

\begin{figure}[htbp]
  \centering
  \includegraphics[scale=0.75]{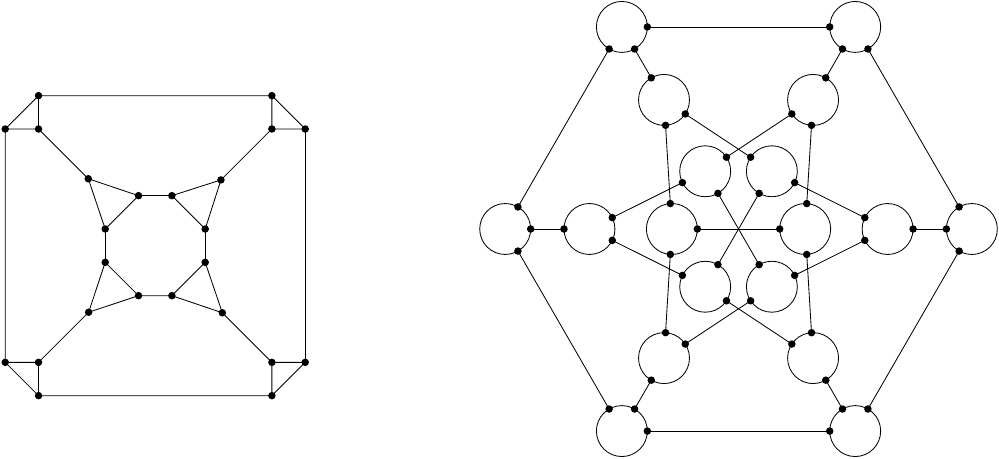}
  \caption{The truncation of the cube graph, on the left, and the truncation of the Pappus graph, on the right.   \label{fig:truncs}}
\end{figure}

\begin{theorem}\label{thm:cantfind}
If the eigenvalues of a cubic graph $G$ are $\mu_1, \ldots, \mu_n$, the eigenvalues of the truncation of $G$ are
\[
\lambda_i = \frac{1 \pm \sqrt{4\mu_i + 13}}{2}
\]
for $i = 1,\ldots, n$ and $-2$ and $0$, each with multiplicity $\frac{n}{2}$.
\end{theorem}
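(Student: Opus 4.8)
The plan is to exploit the identity $T(G)=L(S(G))$ recorded above, where $S(G)$ denotes the subdivision of $G$, and to compute the spectrum of this line graph from incidence matrices. Write $n=|V(G)|$ and $m=|E(G)|=\tfrac{3n}{2}$, let $B$ be the $n\times m$ unsigned vertex--edge incidence matrix of $G$, so that $BB^T=A(G)+3I_n$ since $G$ is cubic, and let $C$ be the incidence matrix of $S(G)$. Listing the vertices of $S(G)$ as the original vertices of $G$ followed by the subdivision vertices, one has
\[
CC^T=A(S(G))+D(S(G))=\pmat{3I_n & B\\ B^T & 2I_m},\qquad C^TC=A(T(G))+2I.
\]
Hence it suffices to find $\spec(CC^T)$, pass to $\spec(C^TC)$ using that $C^TC$ and $CC^T$ share the same nonzero eigenvalues with multiplicities, and subtract $2$. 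Here $S(G)$ has $n_S=n+m=\tfrac{5n}{2}$ vertices and $m_S=2m=3n$ edges, so the transfer introduces $0$ with multiplicity $m_S-\rk C$ on the $C^TC$ side; keeping track of this multiplicity is the crux of the argument.

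To obtain $\spec(CC^T)$ I would solve the block eigenvalue system $CC^T(\Zv,\Zu)^T=\nu(\Zv,\Zu)^T$. The lower block gives $B^T\Zv=(\nu-2)\Zu$, and for $\nu\neq2$, substituting $\Zu=(\nu-2)^{-1}B^T\Zv$ into the upper block yields $BB^T\Zv=(\nu-2)(\nu-3)\Zv$. Since the eigenvalues of $BB^T=A(G)+3I$ are $\mu_i+3$, this forces $(\nu-2)(\nu-3)=\mu_i+3$, i.e.
\[
\nu=\frac{5\pm\sqrt{13+4\mu_i}}{2},\qquad i=1,\dots,n.
\]
The residual case $\nu=2$ is handled separately: its eigenspace in $CC^T$ is $\{(\mathbf 0,\Zu):B\Zu=\mathbf 0\}$, and a dimension count based on $\rk B=\rk BB^T$ (taking into account that when $-3\in\spec(G)$ the corresponding roots above already equal $2$ and merge into this block) shows that in total $\spec(CC^T)=\bigl\{\tfrac{5\pm\sqrt{13+4\mu_i}}{2}:1\le i\le n\bigr\}\uplus\{2^{(n/2)}\}$, which has the expected size $\tfrac{5n}{2}$.

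It remains to feed this back through the transfer step. A root $\tfrac{5\pm\sqrt{13+4\mu_i}}{2}$ vanishes precisely when $\mu_i=3$, and $3$ occurs in $\spec(G)$ exactly $z$ times, where $z$ is the number of connected components of $G$; thus $\rk C=\tfrac{5n}{2}-z$. Passing from $CC^T$ to $C^TC$ therefore deletes these $z$ zeros and appends $0$ with multiplicity $m_S-\rk C=\tfrac{n}{2}+z$, a net gain of $\tfrac n2$ zeros, so that
\[
\spec(C^TC)=\Bigl\{\tfrac{5\pm\sqrt{13+4\mu_i}}{2}:1\le i\le n\Bigr\}\uplus\{2^{(n/2)}\}\uplus\{0^{(n/2)}\}.
\]
Subtracting $2$ gives $\lambda_i=\tfrac{1\pm\sqrt{4\mu_i+13}}{2}$ together with $-2$ and $0$, each with multiplicity $\tfrac n2$, as claimed. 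The main obstacle is exactly this zero-eigenvalue bookkeeping: because $3\in\spec(G)$ always, $CC^T$ is singular and one cannot simply quote the clean nonsingular form of the line-graph spectrum formula; moreover the $\nu=2$ part of $\spec(CC^T)$ — which is what produces the eigenvalue $0$ of $T(G)$ — does not come out of the $BB^T$-reduction and must be pinned down directly through $\ker B$, with the coincidence $13+4\mu_i=1$ at $\mu_i=-3$ a small extra case to verify.
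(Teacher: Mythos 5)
Your proof is correct, and it is essentially the standard derivation (via $T(G)=L(S(G))$ and the incidence matrices of $G$ and $S(G)$) that underlies the references the paper cites in place of a proof; your multiplicity bookkeeping for the eigenvalues $0$ and $-2$ checks out. The only step left implicit is why a $\nu=2$ eigenvector $(\Zv,\Zu)$ of $CC^T$ must have $\Zv=0$: the lower block gives $\Zv\in\ker B^T=(\operatorname{im}B)^{\perp}$ while the upper block gives $-\Zv=B\Zu\in\operatorname{im}B$, forcing $\Zv=0$ and hence $B\Zu=0$, which is the one-line check needed to justify your description of that eigenspace.
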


This implies the following result.

\begin{corollary} \label{cor:truncation}
For a connected cubic graph $G$ the following statements are equivalent:
\begin{itemize}
\item[\rm (a)]
$G$ is bipartite.
\item[\rm (b)]
$\lambda=1$ is an eigenvalue of the truncation $T(G)$ of $G$.
\item[\rm (c)]
$\lambda=1$ is a simple eigenvalue of $T(G)$.
\end{itemize}
\end{corollary}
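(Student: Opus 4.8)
The plan is to read everything off Theorem~\ref{thm:cantfind}, combined with the classical spectral fact (well known, and recalled earlier in the paper) that a connected $k$-regular graph has $-k$ among its eigenvalues if and only if it is bipartite, and that in that case $-k$ is a simple eigenvalue.

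\emph{Step 1: locating the eigenvalue $1$ of $T(G)$.} Since $G$ is cubic and connected, every eigenvalue $\mu_i$ lies in $[-3,3]$, so $4\mu_i + 13 \ge 1 > 0$ and $\sqrt{4\mu_i+13}$ is a nonnegative real number that is at least $1$. Consequently $\frac{1 - \sqrt{4\mu_i+13}}{2} \le 0 < 1$, so this branch of the formula in Theorem~\ref{thm:cantfind} never equals $1$; and the extra eigenvalues $-2$ and $0$ of $T(G)$ are also different from $1$. Hence $1$ is an eigenvalue of $T(G)$ exactly when $\frac{1+\sqrt{4\mu_i+13}}{2}=1$ for some $i$, i.e. when $\sqrt{4\mu_i+13}=1$, i.e. when $\mu_i = -3$. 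Therefore $1$ is an eigenvalue of $T(G)$ if and only if $-3$ is an eigenvalue of $G$, which by the quoted fact is equivalent to $G$ being bipartite; this gives (a) $\Leftrightarrow$ (b).

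\emph{Step 2: multiplicities.} By Step~1 the only contributions to the eigenvalue $1$ of $T(G)$ come from the indices $i$ with $\mu_i = -3$, and each such index contributes the value $1$ exactly once: the $+$ branch gives $\lambda_i = 1$ while the $-$ branch gives $\lambda_i = 0 \ne 1$. Hence the multiplicity of $1$ as an eigenvalue of $T(G)$ equals the multiplicity of $-3$ as an eigenvalue of $G$. Now (c) $\Rightarrow$ (b) is trivial, and for (b) $\Rightarrow$ (c): if $1$ is an eigenvalue of $T(G)$, then $G$ is bipartite by Step~1, so $-3$ is a \emph{simple} eigenvalue of $G$, and therefore $1$ is a simple eigenvalue of $T(G)$.

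\emph{Expected difficulty.} There is no real obstacle: the statement is essentially bookkeeping on top of Theorem~\ref{thm:cantfind}. The one point requiring a little care is the multiplicity computation in Step~2 — checking that $\mu_i = -3$ produces $\lambda = 1$ with multiplicity exactly one (the two sign branches are distinct there, yielding $1$ and $0$ rather than a repeated root), which is precisely what upgrades the equivalence (a) $\Leftrightarrow$ (b) to the full three-way equivalence that also includes (c).
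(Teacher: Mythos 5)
Your proposal is correct and takes essentially the same route as the paper: both deduce from Theorem~\ref{thm:cantfind} that $\lambda=1$ occurs in $T(G)$ exactly when $\mu=-3$ occurs in $G$, and then invoke the classical fact that a connected cubic graph has $-3$ as an eigenvalue if and only if it is bipartite, in which case $-3$ is simple. Your Steps 1 and 2 merely spell out the branch and multiplicity bookkeeping that the paper leaves implicit.
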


\begin{proof}
We see from Theorem \ref{thm:cantfind} that $\lambda_i=1$ if and only if $\mu_i=-3$. A connected cubic graph has $-3$ as an eigenvalue if and only if it is bipartite, in which case $-3$ is a simple eigenvalue (and hence $1$ is a simple eigenvalue of $T(G)$).
\end{proof}

\begin{corollary}
 If $X$ is a connected vertex-transitive cubic graph containing a cycle of length 3, then $X$ has 1 as a simple eigenvalue if and only if $X = T(G)$, where $G$ is a connected, bipartite, arc-transitive cubic multigraph.
\end{corollary}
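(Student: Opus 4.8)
The plan is to prove the two implications separately, assembling the structural results already obtained. For the easy direction, suppose $X=T(G)$ with $G$ a connected, bipartite, arc-transitive cubic multigraph. Then $X$ is cubic and each vertex of $G$ contributes a triangle of $X$, so $X$ contains a $3$-cycle. Moreover an automorphism $\alpha$ of $G$ acts on the incident pairs $(v,e)$ of $T(G)$ by $(v,e)\mapsto(\alpha(v),\alpha(e))$, and this map preserves both adjacency rules in the definition of $T(G)$; since $G$ is bipartite it has no loops, so its arcs are exactly the vertices of $T(G)$ and arc-transitivity of $G$ yields vertex-transitivity of $X=T(G)$. Finally, $G$ is a connected cubic bipartite (multi)graph, so Corollary~\ref{cor:truncation} gives that $1$ is a simple eigenvalue of $T(G)=X$.

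For the converse, assume $X$ is connected, vertex-transitive and cubic, has $1$ as a simple eigenvalue, and contains a triangle $T_0$. By Lemma~\ref{lem:gplusminus}, fix the (unique) partition $V(X)=V^+\cup V^-$ into two $2$-regular induced subgraphs with the perfect matching $M$ between them. The first step is to show that every cycle of $X[V^+]$ and $X[V^-]$ is a triangle. Since $M$ is a perfect matching, a triangle of $X$ cannot have exactly one or two vertices on one side of the partition --- the vertex lying alone on its side would then meet two edges of $M$ --- so $T_0$ is contained in $V^+$ or in $V^-$; being a connected $2$-regular subgraph there, it is one of the cycles $C_i$. Hence all these cycles have length $3$ and $X$ is of type $C(m,3)$.

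Next I would pass to the contracted multigraph $G$, obtained by contracting each triangle $C_i,D_i$ to a vertex $c_i,d_i$. By Lemma~\ref{lem:emb-auts}, $G$ is connected, cubic, bipartite (with parts $\{c_i\}$ and $\{d_i\}$) and arc-transitive. It remains to prove $X\cong T(G)$. I would define $\phi\colon V(X)\to V(T(G))$ by sending a vertex $x$ lying on the triangle $C_i$ to the incident pair $(c_i,\tilde e)$, where $\tilde e\in E(G)$ is the edge corresponding to the unique matching edge of $M$ at $x$ (and symmetrically for $x$ on some $D_i$). Since each edge of $G$ corresponds to a distinct edge of $M$, and each edge of $M$ contributes exactly one endpoint to each of the two triangles it joins, $\phi$ is a bijection between the $6m$ vertices of $X$ and the $6m$ incident pairs of $G$. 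Under $\phi$, the edges of $X$ inside a triangle $C_i$ (resp.\ $D_i$) correspond to the edges of $T(G)$ inside the triangle at $c_i$ (resp.\ $d_i$), using that the three vertices of a triangle are mutually adjacent in both graphs, while the edges of $M$ correspond to the ``edge'' edges of $T(G)$. Hence $\phi$ is a graph isomorphism $X\cong T(G)$.

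I expect the main obstacle to be this last step, namely verifying $X\cong T(G)$ carefully when $G$ has parallel edges: then two matching edges leaving a single triangle $C_i$ may end at the same neighbouring triangle, so one must track incident pairs (which stay distinct) against the underlying unordered pairs of triangles, to be certain $\phi$ does not identify two vertices of $X$. Everything else is a routine combination of the quoted lemmas and Corollary~\ref{cor:truncation}.
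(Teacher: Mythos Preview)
Your proof is correct and follows essentially the same approach as the paper: use Lemma~\ref{lem:gplusminus} to get the partition, argue that the given triangle cannot straddle the matching $M$ and must therefore be one of the cycles in $X[V^\pm]$, contract the triangles to obtain the bipartite arc-transitive cubic multigraph $G$, and invoke Corollary~\ref{cor:truncation} for the converse. You are more explicit than the paper on two points---the vertex-transitivity of $T(G)$ in the easy direction, and the explicit isomorphism $X\cong T(G)$ via incident pairs (the paper simply says ``contract the triangles'' and cites part~(v) of Lemma~\ref{lem:gplusminus} rather than Lemma~\ref{lem:emb-auts})---but these are differences in level of detail, not in strategy.
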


\begin{proof} First we show that if $X$ is vertex-transitive, contains a triangle and has $1$ as a simple eigenvalue, then $X$ must be the truncation of a graph $G$. Since $X$ has $1$ as a simple eigenvalue, we may partition the vertices of $X$ into cycles $V^+$ and $V^-$, as in Lemma \ref{lem:gplusminus}. Let $v$ be vertex in $V^+$. We have that $v$ is incident to a cycle of length $3$, say $T$, in $X$. Since there is a matching between $V^+$ and $V^-$, the triangle $T$ does not use any edge of the matching. Thus $v$ must be incident to a cycle of length 3 in the subgraph of $X$ induced by $V^+$. Since $V^+$ induces a vertex-transitive $2$-regular subgraph of $X$ by Lemma \ref{lem:gplusminus}, $X[V^+]$ must be a disjoint union of cycles of length $3$. The same holds for $X[V^-]$. Let $G$ be obtained from $X$ by contracting all the $3$-cycles in $X[V^+]$ and $X[V^-]$. We see that $G$ is cubic, since each vertex of $X$ is incident to exactly $1$ edge which is not contracted to obtain $G$. By part (v) of Lemma \ref{lem:gplusminus}, $G$ is an arc-transitive bipartite graph, as claimed.

The converse implication is clear by Corollary \ref{cor:truncation}.
\end{proof}

Since the cube graph and the Pappus graph are both cubic, arc-transitive, bipartite graphs, their truncations, shown in Figure \ref{fig:truncs} are examples of cubic vertex-transitive graphs with $1$ as a simple eigenvalue.

\subsection{Prisms and generalized Petersen graphs}\label{sect:genPetersen}

In this section, we classify which prisms and which generalized Petersen graphs have $1$ as a simple eigenvalue.

The \emph{prism of order $2n$} is the Cartesian product of $C_n$ with $K_2$. The prism of order $12$ appears in Figure \ref{fig:prism6}. The eigenvalues of the prism graph of order $2n$ are (see, e.g., \cite{CDS95})
$$
    2\cos \frac{2\pi j}{n} \pm 1, \qquad j=0,1,\dots,n-1.
$$
When $j=0$, this gives the eigenvalue $1$. Thus, a prism is a cubic vertex-transitive graph, which always has 1 as an eigenvalue. However, this eigenvalue is not always simple. This happens if and only if $\cos \frac{2\pi j}{n} = 0$ for some $j$; which is the case if and only if $j=n/4$ or $j=3n/4$. This immediately gives the following characterization.

\begin{proposition}
The prism of order $2n$ has $\lambda = 1$ as simple eigenvalue if and only if \linebreak $n \not\equiv 0\ (\bmod\ 4)$. \end{proposition}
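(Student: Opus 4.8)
The plan is to start from the spectral formula for the prism that is already stated in the excerpt, namely that the eigenvalues of the prism of order $2n$ are $2\cos(2\pi j/n)\pm 1$ for $j=0,1,\dots,n-1$. Since each choice of sign and each $j$ contributes one eigenvalue, the eigenvalue $1$ occurs exactly once for each pair $(j,s)$ with $s\in\{+1,-1\}$ satisfying $2\cos(2\pi j/n)+s=1$. I would first enumerate these solutions: the ``$+$'' branch gives $\cos(2\pi j/n)=0$, while the ``$-$'' branch gives $\cos(2\pi j/n)=1$, i.e. $j=0$. So $1$ is always an eigenvalue (from $j=0$ on the ``$-$'' branch), and it is simple precisely when the ``$+$'' branch contributes nothing, i.e. when $\cos(2\pi j/n)\neq 0$ for every $j\in\{0,\dots,n-1\}$.

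Next I would translate the condition $\cos(2\pi j/n)=0$ into an arithmetic statement. This holds iff $2\pi j/n$ is an odd multiple of $\pi/2$, i.e. iff $4j\equiv n \pmod{2n}$, equivalently $n=4j$ or $n=4j-2n$-type relations; more cleanly, a solution $j\in\{0,\dots,n-1\}$ exists iff $n/4$ or $3n/4$ is an integer in that range, which happens exactly when $4\mid n$ (the case $j=n/4$, and also $j=3n/4$). Conversely if $4\nmid n$ then no $j$ makes $\cos(2\pi j/n)=0$. I should double-check the small-$n$ boundary cases and the possibility that the ``$+$''-branch value $j=0$ also gives $1$: at $j=0$ the ``$+$'' branch gives $2+1=3\neq 1$, so there is no coincidence there, and the only source of multiplicity is a $j$ with $\cos(2\pi j/n)=0$. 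One more subtlety: I must make sure that when $4\mid n$ the relevant $j=n/4$ (or $3n/4$) genuinely lies in $\{0,\dots,n-1\}$ and is distinct from $j=0$, which it is for all $n\ge 4$; for $n<4$ with $4\nmid n$ the claim is vacuously on the ``simple'' side, consistent with $F_{2n}$-type small cases.

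Thus the argument is essentially: $1$ is a simple eigenvalue of the prism of order $2n$ $\iff$ the only pair $(j,s)$ producing it is $(0,-1)$ $\iff$ no $j\in\{1,\dots,n-1\}$ satisfies $\cos(2\pi j/n)=0$ $\iff$ $n\not\equiv 0\pmod 4$. The main (and only real) obstacle is the bookkeeping in the last equivalence — being careful about which $j$ in the range $\{0,\dots,n-1\}$ solve $\cos(2\pi j/n)=0$, and confirming that $j=n/4$ and $j=3n/4$ are the complete list when $4\mid n$ — but this is a short trigonometric/number-theoretic check rather than a conceptual difficulty. No graph-theoretic input beyond the quoted eigenvalue formula is needed.
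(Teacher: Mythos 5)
Your argument is correct and is essentially the same as the paper's: both start from the eigenvalue formula $2\cos(2\pi j/n)\pm 1$, note that $j=0$ on the ``$-$'' branch always yields $1$, and observe that an additional occurrence of $1$ can only come from $\cos(2\pi j/n)=0$, i.e.\ $j=n/4$ or $j=3n/4$, which exist precisely when $4\mid n$. Your extra bookkeeping on the boundary cases is fine but not needed beyond what the paper records.
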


Prisms are a special case of the generalized Petersen graphs, where the multiplicity of eigenvalue $1$ is relatively straightfoward to understand. We now turn our attention to the more general case.

The \emph{generalized Petersen graph}, denoted $P(n,k)$ is the graph with vertex set $[n]\times [2]$, where $[n] = \{1,\dots,n\}$, and each vertex $(j,1)$ is adjacent to $(j,2)$ and to the vertices $(j\pm1,1)$, while $(j,2)$ is adjacent to $(j\pm k,2)$, all operations taken modulo $n$. The well-known Petersen graph is isomorphic to $P(5,2)$ and the prism of order $2n$ is isomorphic to  $P(n,1)$. See Figure \ref{fig:genpete} for some other examples of generalized Petersen graphs.

\begin{figure}[htbp]
  \centering
  \includegraphics[scale=0.78]{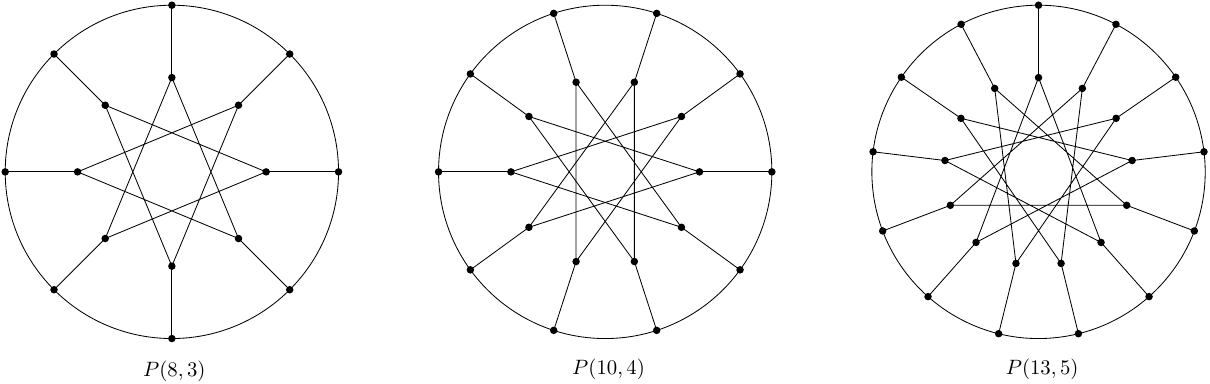}
  \caption{Examples of generalized Petersen graphs.   \label{fig:genpete}}
\end{figure}

The vertex-transitivity of the generalized Petersen graphs is given in the following theorem.

\begin{theorem}[\cite{FGW71}]\label{thm:vtpete}
The generalized Petersen graph $P(n,k)$ is vertex-transitive if and only if $(n,k) = (10,2)$ or $k^2 \equiv \pm 1 \ (\bmod\ n)$.
\end{theorem}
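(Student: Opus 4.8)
The plan is to prove the two implications separately; the ``if'' direction is short, and the ``only if'' direction carries the weight. Write $O=\{(j,1)\mid j\in[n]\}$ and $I=\{(j,2)\mid j\in[n]\}$ for the outer and inner vertices, and let $\rho$ be the rotation $(j,i)\mapsto(j+1,i)$, always an automorphism of $P(n,k)$. Since $\langle\rho\rangle$ acts transitively on $O$ and on $I$, the graph is vertex-transitive if and only if $\Aut(P(n,k))$ contains an element sending some vertex of $O$ into $I$; call such an element a \emph{crossing} automorphism.

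For sufficiency, suppose $k^2\equiv\pm1\pmod n$. Then $\sigma\colon(j,i)\mapsto(kj,\,3-i)$ is a crossing automorphism: it is a bijection because $k$ is a unit modulo $n$, it carries outer edges to inner edges and spokes to spokes, and it sends an inner edge $(j,2)(j+k,2)$ to the pair $(kj,1),(kj+k^2,1)$, which is an outer edge precisely because $k^2\equiv\pm1$. Hence $\langle\rho,\sigma\rangle$ is vertex-transitive. The only case in which a crossing automorphism must be produced without this hypothesis is $(n,k)=(10,2)$: here $P(10,2)$ is the $1$-skeleton of the dodecahedron, hence vertex-transitive (alternatively, exhibit the extra automorphisms directly).

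For necessity, assume $P(n,k)$ is vertex-transitive. Replacing $k$ by $n-k$ if needed (this yields the same graph and does not affect whether $k^2\equiv\pm1$) we may assume $1\le k\le n/2$, the prism case $k=1$ being immediate. Fix a crossing automorphism $\sigma$. The crux is the claim that $\sigma$ maps the outer cycle $C_O=(0,1)(1,1)\cdots(n-1,1)$ onto a component cycle of $X[I]$. Granting this, $X[I]$ has a component of length $n$, so $\gcd(n,k)=1$ and $X[I]$ is the single $n$-cycle $C_I$ whose $t$-th vertex is $(tk,2)$; composing $\sigma$ with a power of $\rho$ and possibly the reflection $(j,i)\mapsto(-j,i)$, we may take $\sigma((t,1))=(tk,2)$ for all $t$. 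As $\sigma$ is a bijection interchanging the vertex sets of $C_O$ and $C_I$, it maps $C_I$ onto $C_O$, and since the only neighbour of $(tk,2)$ lying in $O$ is its spoke endpoint, $\sigma((t,2))=(tk,1)$. Applying $\sigma$ to the inner edge $(t,2)(t+k,2)$ now shows that $(tk,1)(tk+k^2,1)$ is an edge, which forces $k^2\equiv\pm1\pmod n$, as required.

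What remains is to prove the claim and to settle the finitely many configurations it does not cover. For the claim (for $k\ge3$ and $n$ large) one wants a cycle-theoretic invariant preserved by $\sigma$ that distinguishes $C_O$ and the components of $X[I]$ from every other cycle of $P(n,k)$ --- e.g. lower-bounding the girth by $7$ once $k\ge3$ and $n$ is large enough to rule out small additive coincidences modulo $n$, and then characterising $C_O\cup X[I]$ among $2$-factors by the property that each vertex on a factor cycle sends its third edge off the factor while the factor cycles are not short. The configurations left open --- $k=2$, small $n$, and $\gcd(n,k)>1$ (which automatically has $k^2\not\equiv\pm1$, so there one is asking for \emph{non}-vertex-transitivity) --- are checked directly, and it is exactly here that $P(10,2)$ turns up as the unique vertex-transitive graph with $k^2\not\equiv\pm1$; every other small instance is non-vertex-transitive or isomorphic to one already handled. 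I expect this case analysis, and pinning down the short-cycle structure sharply enough to prove the claim, to be the main obstacle; the algebra once $\sigma$ is known to interchange $C_O$ with an inner cycle is routine.
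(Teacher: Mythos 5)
The paper does not prove this theorem; it is quoted from Frucht, Graver and Watkins \cite{FGW71}, so there is no in-paper argument to compare yours against. Judged on its own, your sufficiency direction is complete and correct: $\sigma\colon(j,i)\mapsto(kj,3-i)$ together with the rotation $\rho$ gives vertex-transitivity whenever $k^2\equiv\pm1\pmod n$, and $P(10,2)$ is the dodecahedron. The gap is in the necessity direction, which you correctly reduce to the ``claim'' that a crossing automorphism carries the outer cycle onto a component of the inner subgraph --- and then leave unproven. That claim is not a technical preliminary; it is essentially the whole content of the Frucht--Graver--Watkins determination of $\Aut(P(n,k))$, namely that outside the seven exceptional pairs $(n,k)\in\{(4,1),(5,2),(8,3),(10,2),(10,3),(12,5),(24,5)\}$ every automorphism preserves the set of spokes and hence the partition of the edges into outer cycle, spokes and inner cycles. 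For each of those seven graphs your claim is actually \emph{false}: for instance in $P(5,2)$ the twelve $5$-cycles fall into six complementary pairs permuted transitively by $S_5$, so there are crossing automorphisms sending $C_O$ to a $5$-cycle that is neither $C_O$ nor $C_I$. The conclusion $k^2\equiv\pm1$ still happens to hold for all of them except $(10,2)$, but your argument cannot ``check the remaining configurations directly'' until it has first identified this exceptional list, and nothing in the sketch produces it: the exceptions are not confined to $k=2$, small $n$, or $\gcd(n,k)>1$ --- witness $(12,5)$ and $(24,5)$.

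The proposed route to the claim also fails as stated. Every $P(n,k)$ with $2\le k\le n-2$ contains the $8$-cycle $(0,1)(0,2)(k,2)(k,1)(k{+}1,1)(k{+}1,2)(1,2)(1,1)$ and the $(k{+}3)$-cycle $(0,1)(1,1)\cdots(k,1)(k,2)(0,2)$, so the girth never exceeds $8$ and is at most $6$ when $k=3$; thus ``girth at least $7$ once $k\ge3$'' is unavailable for $k\in\{2,3\}$, and even where it holds it does not by itself single out $C_O\cup X[I]$ among $2$-factors. What is actually needed is the edge-type analysis of \cite{FGW71} (counting short cycles through an edge according to whether it is an outer edge, a spoke, or an inner edge) to show that the edge partition is automorphism-invariant except in the seven listed cases, followed by a direct inspection of those seven graphs. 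That analysis is the real work of the theorem and is absent from your write-up, so as it stands the proof is incomplete.
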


The following theorem gives the eigenvalues of $P(n,k)$.

\begin{theorem}[\cite{GS11}]
The graph $P(n,k)$ has eigenvalues $\delta$ for every  root  $\delta$ of
\begin{equation} \label{eq:minpoly}
   x^2 - (\alpha_j + \beta_j)x + \alpha_j \beta_j - 1 = 0
\end{equation}
for $j = 0, \ldots, n-1$, where
\[
   \alpha_j = 2\cos(\tfrac{2\pi j}{n}) \text{\quad and \quad} \beta_j  = 2\cos(\tfrac{2\pi jk}{n}).
\]
\end{theorem}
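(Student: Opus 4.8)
The plan is to exploit the circulant block structure of the adjacency matrix $A$ of $P(n,k)$. Order the vertices so that the outer vertices $(1,1),\dots,(n,1)$ come first and the inner vertices $(1,2),\dots,(n,2)$ come second. Then
\[
A = \pmat{C & I \\ I & C'},
\]
where $C$ is the adjacency matrix of the outer $n$-cycle (the circulant with $1$'s in the $j\sim j\pm1$ positions), $C'$ is the circulant with $1$'s in the $j\sim j\pm k$ positions (the inner ``star polygon'' adjacencies), and $I$ records the spokes. Both $C$ and $C'$ are circulant matrices, hence simultaneously diagonalized by the discrete Fourier basis, and this is the whole engine of the proof.

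First I would introduce, for $j=0,\dots,n-1$, the Fourier vector $u_j = (1,\omega^j,\omega^{2j},\dots,\omega^{(n-1)j})^T \in \cx^{n}$, where $\omega = \exp(2\pi {\rm i}/n)$. A direct computation gives $C u_j = \alpha_j u_j$ and $C' u_j = \beta_j u_j$ with $\alpha_j = \omega^j + \omega^{-j} = 2\cos(2\pi j/n)$ and $\beta_j = \omega^{jk} + \omega^{-jk} = 2\cos(2\pi jk/n)$, which are exactly the quantities in the statement. Next I would observe that the two-dimensional subspace $W_j = \mathrm{span}\{(u_j,\mathbf{0})^T,(\mathbf{0},u_j)^T\} \subseteq \cx^{2n}$ is $A$-invariant: since $A(u_j,\mathbf{0})^T = (\alpha_j u_j, u_j)^T$ and $A(\mathbf{0},u_j)^T = (u_j, \beta_j u_j)^T$, the restriction of $A$ to $W_j$ is represented, in the basis $\{(u_j,\mathbf{0})^T,(\mathbf{0},u_j)^T\}$, by the $2\times2$ matrix $M_j = \pmat{\alpha_j & 1 \\ 1 & \beta_j}$, whose characteristic polynomial is precisely $x^2 - (\alpha_j+\beta_j)x + \alpha_j\beta_j - 1$, i.e.\ equation \eqref{eq:minpoly}.

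Then I would assemble the pieces. Because $\{u_0,\dots,u_{n-1}\}$ is a basis of $\cx^{n}$, the $2n$ vectors $(u_j,\mathbf{0})^T$ and $(\mathbf{0},u_j)^T$ form a basis of $\cx^{2n}$, so $\cx^{2n} = \bigoplus_{j=0}^{n-1} W_j$ is a decomposition into $A$-invariant subspaces. Consequently the spectrum of $A$, counted with multiplicity, is the union of the spectra of the matrices $M_j$, and the eigenvalues of $P(n,k)$ are exactly the roots (with multiplicity) of \eqref{eq:minpoly} as $j$ ranges over $0,\dots,n-1$. One can remark in passing that $M_j = M_{n-j}$, so these eigenvalues occur in coincident pairs, consistent with the stated multiplicity count, and that each $M_j$ is real symmetric, so all roots of \eqref{eq:minpoly} are real, as they must be for the real symmetric matrix $A$.

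The only genuinely delicate point is the bookkeeping of multiplicities: one must check that the subspaces $W_j$ are not merely invariant but also linearly independent, which follows from the linear independence of the Fourier vectors $u_j$, so that no eigenvalue is over- or under-counted. Once the direct-sum decomposition $\cx^{2n} = \bigoplus_j W_j$ is in place, this is automatic; everything else is routine circulant algebra.
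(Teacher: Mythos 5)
Your proof is correct. The paper itself only quotes this result from \cite{GS11} without proof, but your argument --- simultaneous diagonalization of the two circulant blocks by the Fourier vectors $u_j$, reduction to the $2\times 2$ invariant blocks $M_j=\pmat{\alpha_j & 1 \\ 1 & \beta_j}$, and reading off their characteristic polynomials --- is exactly the method the paper attributes to \cite{GS11} and replays for $T_m$ in Theorem~\ref{thm:Tm}, the only cosmetic difference being that there the eigenvectors $(\alpha\, u, u)^T$ are written out explicitly, which additionally requires checking that the discriminant $(\alpha_j-\beta_j)^2+4$ is positive so that each block yields two independent eigenvectors, a verification your restriction-matrix formulation renders unnecessary.
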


The eigenvalues of $P(n,k)$ which are equal to 1 are solutions for Equation (\ref{eq:minpoly}) where $x = 1$, which we may simplify as:
\[
\alpha_j + \beta_j = \alpha_j  \beta_j .
\]
We may let $\theta = \frac{2\pi j}{n}$ and rewrite as:

\begin{equation} \label{eq:1eqn}
 \cos\theta + \cos k\theta = 2\cos\theta \cos k\theta = \cos((k-1)\theta) + \cos((k+1)\theta).
\end{equation}
Observe that $j=0$ gives a solution to this equation for any $k$. Hence, every generalized Petersen graph has eigenvalue $1$ with multiplicity at least one.

\begin{theorem}\label{thm:genPeteClassifcation} $P(n,k)$ has $1$ as a simple eigenvalue if and only if one of the following holds:
\begin{enumerate}
  \item[\rm (i)] $4 \nmid n$ and $5 \nmid n$.
  \item[\rm (ii)] $4 \mid n$ and $k$ is even.
  \item[\rm (iii)] $5 \mid n$ and $k \notin \{ 2,3, n-3, n-2\}$.
\end{enumerate}
\end{theorem}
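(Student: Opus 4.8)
The plan is to compute the exact multiplicity of the eigenvalue $1$ and to show that it equals $1$ precisely in the three listed cases. By the theorem of \cite{GS11}, the spectrum of $P(n,k)$ is the multiset union, over $j=0,\dots,n-1$, of the two roots of $x^2-(\alpha_j+\beta_j)x+\alpha_j\beta_j-1$, where $\alpha_j=2\cos\theta_j$, $\beta_j=2\cos k\theta_j$ and $\theta_j=\tfrac{2\pi j}{n}$. First I would note that $1$ is never a \emph{double} root of one of these quadratics: that would force $\alpha_j+\beta_j=2$ and $\alpha_j\beta_j=2$, so $\alpha_j,\beta_j$ would be the (non-real) roots of $t^2-2t+2$, contradicting $\alpha_j,\beta_j\in\re$. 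Hence the multiplicity of $1$ equals the number of indices $j\in\{0,\dots,n-1\}$ satisfying Equation~(\ref{eq:1eqn}); since $j=0$ always qualifies, ``$1$ is simple'' is equivalent to ``Equation~(\ref{eq:1eqn}) has no solution with $1\le j\le n-1$''.

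Next I would recast Equation~(\ref{eq:1eqn}). With $c=\cos\theta_j$, $c'=\cos k\theta_j$ it reads $c+c'=2cc'$, i.e.\ $(2c-1)(2c'-1)=1$; and with $z=e^{2\pi i j/n}$ (an $n$-th root of unity) it becomes the polynomial identity $(z^{2k}+1)(z^2-z+1)=z^k(z^2+1)$. I would first dispose of the case $c=\cos\theta_j=0$: then $(2c-1)(2c'-1)=1$ forces $c'=0$, such a $j$ exists iff $4\mid n$ (take $j=n/4$), and then $\cos k\theta_j=\cos\tfrac{k\pi}{2}=0$ holds iff $k$ is odd. (Equivalently, $z=\pm i$ is a root of the identity iff $k$ is odd.) This is the obstruction behind (ii): when $4\mid n$ the eigenvalue $1$ fails to be simple as soon as $k$ is odd, while this particular obstruction disappears when $k$ is even.

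For the complementary case $c\ne 0$, Equation~(\ref{eq:1eqn}) gives $\cos k\theta_j=\tfrac{\cos\theta_j}{2\cos\theta_j-1}$, and a short argument (using symmetry of the relation $c+c'=2cc'$ together with the fact that the only rational values of $\cos(2\pi p/q)$ are $0,\pm\tfrac12,\pm1$) shows that a nontrivial solution must have $\cos\theta_j\in(-1,\tfrac13)\setminus\{0\}$, and likewise for $\cos k\theta_j$. The crucial structural fact is that $c\mapsto c/(2c-1)$ is an involution of $\mathbb{P}^1$ defined over $\rats$ whose only fixed points are $0$ and $1$. Since $\cos\theta_j$ and $\cos k\theta_j$ are cosines of rational multiples of $2\pi$ and generate the \emph{same} number field (again by symmetry of the relation), the assignment $\cos\theta_j\mapsto\cos k\theta_j$ is realised by a Galois automorphism $\sigma$ of $\rats(\cos\theta_j)/\rats$ acting as this involution, so $\sigma^2=\mathrm{id}$. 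Tracking which cyclotomic fields this can force, I expect to conclude that the only root of unity other than $1$ that can satisfy $(z^{2k}+1)(z^2-z+1)=z^k(z^2+1)$ for any $k$ is a primitive fifth root of unity, and a one-line check in $\ints[z]/(z^5-1)$ shows this happens exactly when $k\equiv\pm2\pmod5$; note that $\cos\tfrac{2\pi}{5}=\tfrac{\sqrt5-1}{4}$ and $\cos\tfrac{4\pi}{5}=-\tfrac{\sqrt5+1}{4}$ do satisfy $(2\cos\tfrac{2\pi}{5}-1)(2\cos\tfrac{4\pi}{5}-1)=1$. Thus an index $1\le j\le n-1$ with $\cos\theta_j\ne 0$ satisfying Equation~(\ref{eq:1eqn}) exists iff $5\mid n$ and $k\equiv\pm2\pmod5$, which is the obstruction behind (iii).

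Finally I would assemble the two obstructions: $1$ is a simple eigenvalue of $P(n,k)$ iff neither occurs, i.e.\ iff ($4\nmid n$ or $k$ even) and ($5\nmid n$ or $k\not\equiv\pm2\pmod5$), which rearranges into case (i) ($4\nmid n$ and $5\nmid n$), case (ii) ($4\mid n$, $k$ even), and case (iii) ($5\mid n$, $k\not\equiv\pm2\pmod5$). The step I expect to be the real obstacle is the third paragraph: ruling out ``accidental'' roots of unity solving the polynomial identity, i.e.\ proving that $5$ is the only relevant modulus. The real-variable bound $\cos\theta_j\le\tfrac13$ trims the candidates, but converting it into a proof that no modulus $m$ beyond $\{1,4,5\}$ can occur needs the Galois-theoretic input above, together with careful bookkeeping of which residues of $k$ make $\Phi_m(z)$ divide $(z^{2k}+1)(z^2-z+1)-z^k(z^2+1)$.
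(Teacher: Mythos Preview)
Your route diverges from the paper's after the common reduction to counting indices $j$ with $\cos\theta_j+\cos k\theta_j=2\cos\theta_j\cos k\theta_j$. The paper makes no attempt at a direct cyclotomic or Galois analysis; instead it rewrites the equation as $\cos x+\cos y=\cos(x+y)+\cos(x-y)$ and feeds this into the Conway--Jones classification \cite{CoJo76} of vanishing rational linear combinations of at most four cosines of rational angles (packaged as Lemmas~\ref{lem:45cos} and~\ref{lem:cossums}). That theorem is a ready-made black box for exactly this question: it outputs the complete list of nontrivial solution pairs $(j,\ell)$, and the paper then merely checks which of those can have the form $(j,kj\bmod n)$. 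Your involution $(2c-1)(2c'-1)=1$ and the polynomial identity in $\ints[z]$ are attractive reformulations, but the paper's approach sidesteps the ``rule out all other moduli'' step entirely.

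There are two genuine gaps. First, the Galois step: from $c'=c/(2c-1)$ you correctly get $\rats(c)=\rats(c')$, but equal generating fields do \emph{not} force $c$ and $c'$ to be Galois conjugates (compare $\sqrt2$ and $1+\sqrt2$), so no automorphism $\sigma$ with $\sigma(c)=c'$ is handed to you. You would need to show that $\cos\theta_j$ and $\cos k\theta_j$ have the same minimal polynomial, equivalently that $j/n$ and $kj/n$ have the same reduced denominator, and establishing this is essentially the hard content you hoped to extract from the involution; it is exactly where Conway--Jones earns its keep. Second, your final assembly does not match the stated theorem. The obstruction you derive in the $5\mid n$ case is $k\equiv\pm2\pmod5$, which is strictly broader than $k\in\{2,3,n-3,n-2\}$ once $n\ge15$ (for $n=15$, $k=7$ your own analysis produces the nontrivial solution $j=3$, yet $7\notin\{2,3,12,13\}$). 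Moreover the conjunction ``($4\nmid n$ or $k$ even) and ($5\nmid n$ or $k\not\equiv\pm2\pmod5$)'' does \emph{not} rearrange into the disjunction (i)$\vee$(ii)$\vee$(iii): they disagree whenever $20\mid n$ (take $n=20$, $k=2$; clause~(ii) holds, but your conjunction fails because $j=4$ is a nontrivial solution). You must reconcile what you have actually derived with the statement before claiming a proof.
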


\begin{proof}
As shown above, every $P(n,k)$ has eigenvalue 1 corresponding to the solution $j=0$ to \eqref{eq:1eqn}. We note that solutions to \eqref{eq:1eqn} are equivalent to solution to
\eqref{eqn:roots1} of the form $(j,jk)$ and thus we will make use of the solutions that we found in the proof of Lemma \ref{lem:45cos}. In light of this, Lemma \ref{lem:cossums} shows that $1$ is a simple eigenvalue when $4 \nmid n$ and $5 \nmid n$.

Suppose now that $n = 4a$, where $a$ is an integer. If $k \equiv 1 \pmod 4$ (resp. $k \equiv 3 \pmod 4$), we see that $j=a$ (resp. $j=3a$) is a non-trivial solution to \eqref{eq:1eqn}.
Suppose now that $k$ is even. By Lemma \ref{lem:cossums}, if $4 | m$, the only non-trivial solution to \eqref{eqn:roots1} are when $\left\{ \frac{j}{m},\frac{\ell}{m} \right\} \subseteq \left\{ \frac{1}{4},\frac{3}{4} \right\}$. In the case for the generalized Petersen graphs, solutions to \eqref{eqn:roots1} where $\ell = kj$ are exactly the solution to \eqref{eq:1eqn}. We see that if $k$ is even, then  $\left\{ \frac{j}{n},\frac{jk}{n} \right\}$ is  either $\left\{ \frac{1}{4},\frac{k}{4} \right\}$  or $\left\{ \frac{3}{4},\frac{3k}{4} \right\}$, neither of which can be a subset of $\left\{ \frac{1}{4},\frac{3}{4} \right\}$ since $k$ is even. Thus, in this case, $1$ is a simple eigenvalue.

Suppose now that $5 \mid n$. By Lemma \ref{lem:cossums}, if  $m = 5a$, the only non-trivial solutions to \eqref{eqn:roots1} are $(j,\ell) \in \{a, 4a\}\times\{2a, 3a\} \cup \{2a, 3a\}  \times \{a, 4a\}$. Again, we need solutions to \eqref{eqn:roots1} where $\ell = kj \mod 5a$. We summarize the values of $j, \ell$ and $k$ that we obtain in Table~\ref{tab:jklpete5}.
\begin{table}
  \centering
  \begin{tabular}{l|l|l|l|l|l|l|l|l}
$j$ & \multicolumn{2}{l|}{$a$} & \multicolumn{2}{l|}{$4a$} & \multicolumn{2}{l|}{$2a$} & \multicolumn{2}{l}{$3a$} \\ \hline
$\ell$ &      $2a$     &   $3a$        &  $2a$         &    $3a$       &  $a$         &    $4a$       &   $a$        &   $4a$        \\ \hline
$k$ &     $2$ , $n-3$     &     $3$  , $n-2$    &   $3$  , $n-2$       &     $2$ , $n-3$     &   $3$   , $n-2$      &    $2$  , $n-3$     &    $3$    , $n-2$     &     $2$  , $n-3$    \\
  \end{tabular}
  \caption{Solution to \eqref{eqn:roots1} and \eqref{eq:1eqn} when $n = 5a$.  \label{tab:jklpete5}}
\end{table}
Thus, we get additional solution if and only if $k \in\{ 2,3, n-3, n-2\} \mod n$.
\end{proof}

For the examples in Figure \ref{fig:genpete}, we see by Theorem \ref{thm:genPeteClassifcation}, that $P(10,4)$ and $P(13,5)$ have $1$ as a simple eigenvalue while $P(8,3)$ does not. In fact, $P(13,5)$ is the smallest generalized Petersen graph with $1$ as a simple eigenvalues, which is not isomorphic to a prism.

\subsection{Regular embeddings of $K_{m,m}$}\label{subsec:regular-kmm}

In this section, we consider the truncation, $T_m$, of regular embeddings of the complete bipartite graph $K_{m,m}$. This is an application of the ideas from Section \ref{sec:regmaps}. Here, we characterise which orders of $m$ give graphs where $1$ is a simple eigenvalue, using results on vanishing sums of roots of unity and sums of cosines, which are included in Appendix \ref{app:cosines}.

For $m\ge 3$, let $T_m$ be the cubic graph of order $2m^2$ defined in the following way. The vertices of $T_m$ are $\{\pls{i}{j}, \mn{i}{j} \mid i, j \in \ints_m\}$. The edges are
\[
\{\pls{i}{j}, \pls{i}{j+1}\}, \,
\{\mn{i}{j}, \mn{i}{j+1}\}, \,
\{\pls{i}{j}, \mn{j}{i}\}
\]
for all $i, j \in \ints_m$. It is easy to see that $T_m$ is a cubic, vertex-transitive graph with 1 as an eigenvalue, not necessarily simple, by considering the eigenvector that is $+1$ on every vertex $\pls{i}{j}$ and $-1$ on every vertex $\mn{i}{j}$.
Figure \ref{fig:t3t4} shows $T_3$ and $T_4$.

\begin{figure}[htbp]
  \centering
  \includegraphics{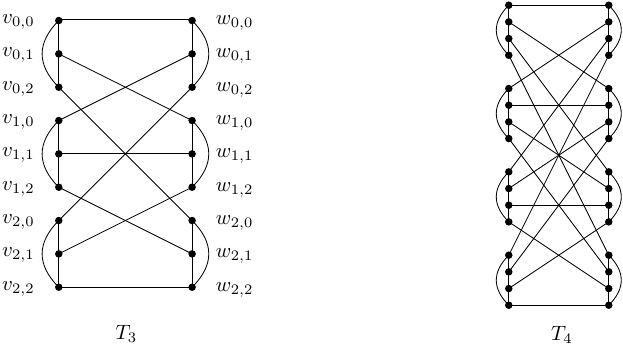}
  \caption{The graph $T_3$, on the left, and  $T_4$, on the right. The vertex labels have been repressed for $T_4$, for readability, but can be easily inferred from those for $T_3$. \label{fig:t3t4}}
\end{figure}

Alternatively, we can construct $T_m$ from the regular embedding of $K_{m,m}$, given by Nedela and \v Skoviera in \cite{NeSk97}. We consider $K_{m,m}$ as a Cayley graph on $\ints_{2m}$ with the connection set $\{1, 3, 5, \ldots, 2m-1\}$ as the generating set. The rotation system $\Pi$ has vertex rotations at each vertex given by the cyclic permutation $(1, 3, 5, \ldots, 2m-1)$ of the generators. The graph $T_m$ is isomorphic to the vertex truncation $T(K_{m,m}, \Pi)$.

Let $B$ be the $m^2 \times m^2$ matrix such that $B = I_m \otimes C_m$, where $C_m$ is the adjacency matrix of the cycle of order $m$ and $I_m$ is the $m\times m$ identity matrix. Let $P$ be the permutation matrix indexed by $\ints_m \times \ints_m$ such that $P$ takes $(i,j)$ to $(j,i)$. The adjacency matrix of $T_m$ can be written as
\[
A := A(T_m) = \pmat{B & I \\ I & P^TBP}.
\]
Observe that $P^2 = I$ and $P = P^T$. By definition, we see that
\[
P(e_i \otimes e_j) = e_j \otimes e_i
\]
where $e_k$ denotes the $k$th elementary basis vector. Then for any $m\times 1$ vectors $\Zv$ and $\Zw$, we see that
\[
P(\Zv\otimes \Zw) = \Zw \otimes \Zv.
\]

\begin{theorem}
\label{thm:Tm}
The eigenvalues of $T_m$ are
\[ \cos \tfrac{2\pi j}{m} + \cos\tfrac{2\pi\ell}{m} \pm \sqrt{ \left(\cos \tfrac{2\pi j}{m} - \cos\tfrac{2\pi\ell}{m}\right)^2 + 1}
\]
for all $(j, \ell) \in \ints_m \times \ints_m$.
\end{theorem}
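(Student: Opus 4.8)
The plan is to block-diagonalize $A = A(T_m) = \pmat{B & I \\ I & P^T B P}$ by finding a single orthogonal (over $\cx$) basis that simultaneously diagonalizes $B$ and $P^T B P$. Recall $B = I_m \otimes C_m$. For $j \in \ints_m$ let $f_j \in \cx^m$ be the vector whose $r$-th entry is $\omega^{jr}$, where $\omega = \exp(2\pi \mathrm{i}/m)$; these are the eigenvectors of the circulant $C_m$, with $C_m f_j = \beta_j f_j$ and $\beta_j := 2\cos(\tfrac{2\pi j}{m})$. Since $\{f_j\}_{j\in\ints_m}$ is a basis of $\cx^m$, the $m^2$ vectors $g_{j,\ell} := f_\ell \otimes f_j$ (for $j,\ell \in \ints_m$) form a basis of $\cx^{m^2}$.

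First I would check that each $g_{j,\ell}$ is an eigenvector of both $B$ and $P^T B P = PBP$. Indeed $B\, g_{j,\ell} = f_\ell \otimes C_m f_j = \beta_j\, g_{j,\ell}$, while, using the identity $P(\Zv\otimes\Zw) = \Zw\otimes\Zv$ noted before the theorem,
\[
PBP\, g_{j,\ell} = PB(f_j \otimes f_\ell) = P(f_j \otimes C_m f_\ell) = \beta_\ell\, P(f_j \otimes f_\ell) = \beta_\ell\, g_{j,\ell}.
\]
Consequently, writing a block vector as $(a\,g_{j,\ell},\, b\,g_{j,\ell}) \in \cx^{2m^2}$, we get
\[
A\,(a\,g_{j,\ell},\, b\,g_{j,\ell}) = \bigl((a\beta_j + b)\,g_{j,\ell},\ (a + b\beta_\ell)\,g_{j,\ell}\bigr),
\]
so the two-dimensional subspace $W_{j,\ell} := \mathrm{span}\{(g_{j,\ell},0),(0,g_{j,\ell})\}$ is $A$-invariant and $A$ acts on it as the matrix $M_{j,\ell} = \pmat{\beta_j & 1 \\ 1 & \beta_\ell}$.

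Next, since the $g_{j,\ell}$ are linearly independent, the $m^2$ subspaces $W_{j,\ell}$ are independent, and as they are each $2$-dimensional their direct sum is all of $\cx^{2m^2}$. Hence the multiset of eigenvalues of $A$ is the union, over all $(j,\ell)\in\ints_m\times\ints_m$, of the eigenvalues of $M_{j,\ell}$. Finally I would compute these: the characteristic polynomial of $M_{j,\ell}$ is $t^2 - (\beta_j+\beta_\ell)t + \beta_j\beta_\ell - 1$, with roots
\[
\frac{\beta_j+\beta_\ell}{2} \pm \sqrt{\tfrac14(\beta_j-\beta_\ell)^2 + 1} = \cos\tfrac{2\pi j}{m} + \cos\tfrac{2\pi\ell}{m} \pm \sqrt{\Bigl(\cos\tfrac{2\pi j}{m} - \cos\tfrac{2\pi\ell}{m}\Bigr)^2 + 1},
\]
which is exactly the claimed list.

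There is no serious obstacle here; the two points that require care are getting the tensor-factor order right in $B = I_m\otimes C_m$ — so that $B$ reads off the index $j$ while $P^T B P$ reads off $\ell$ — and confirming that the subspaces $W_{j,\ell}$ genuinely exhaust $\cx^{2m^2}$, so that no eigenvalue is omitted and each appears with its correct multiplicity. Both follow immediately from the fact that $\{f_j\}_{j\in\ints_m}$ is a basis of $\cx^m$.
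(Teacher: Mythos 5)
Your proof is correct and follows essentially the same route as the paper: both simultaneously diagonalize $B$ and $P^TBP$ via a tensor basis built from an eigenbasis of $C_m$ and reduce the spectrum of $A$ to the roots of the quadratic $t^2-(\beta_j+\beta_\ell)t+\beta_j\beta_\ell-1$. The only cosmetic differences are that you use the complex Fourier eigenvectors of $C_m$ and phrase the last step as a $2\times2$ invariant-subspace decomposition, whereas the paper uses a real eigenbasis and writes the eigenvectors $(\alpha\,\Zv\otimes\Zw,\ \Zv\otimes\Zw)$ of $A$ explicitly.
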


\begin{proof}
To find these eigenvalues, we use the proof method developed in \cite{GS11} in order to find the eigenvalues of generalized Petersen graphs. Let $\Zv, \Zw$ be eigenvectors of $C_m$ with eigenvalues $\lambda$ and $\theta$, respectively. Then
\[
B (\Zv \otimes \Zw) = (I_m \otimes C_m)(\Zv \otimes \Zw) = \Zv \otimes \theta \Zw = \theta(\Zv \otimes \Zw)
\]
and
\[
P^TBP (\Zv \otimes \Zw) = P^T(I_m \otimes C_m)(\Zw \otimes \Zv) = P( \lambda \Zv \otimes \theta \Zw) = \lambda (\Zv \otimes \Zw).
\]
Let $V$ be an eigenbasis for $C_m$ in $\re^m$. Then the basis
\[
W = \{\Zv \otimes \Zw \mid \Zv, \Zw \in V \}
\]
of $\re^{m^2}$ simultaneously diagonalizes $B$ and $P^T BP$. We construct an eigenbasis $U$  of $A$ over $\re^{2m^2}$ such that the elements of $U$ are
\[
\pmat{\alpha \Zv \otimes \Zw \\ \Zv \otimes \Zw}
\]
where $\Zv,\Zw \in V$ with eigenvalues  $\lambda$ and $\theta$, respectively and $\alpha = \delta -\theta $ for each $\delta$ a solution to
\begin{equation}\label{eqn:eroots} \delta^2 - (\theta + \lambda) \delta + \theta \lambda -1 = 0.
\end{equation}
Observe that since
\[
\left(\frac{\theta + \lambda}{2} \right)^2 = \frac{\theta^2 + \lambda^2}{2} + \theta\lambda > \theta \lambda -1
\]
for any $\lambda, \theta \in \re$, Equation (\ref{eqn:eroots}) always has two distinct solutions for $\delta$. The set $U$ consists of $2m^2$ linearly independent vectors in $\re^{2m^2}$. We now verify that each element of $U$ is an eigenvector of $A$ by observing
\[
\begin{split}
A \pmat{\alpha \Zv \otimes \Zw \\ \Zv \otimes \Zw} &= \pmat{B & I \\ I & P^TBP} \pmat{\alpha \Zv \otimes \Zw \\ \Zv \otimes \Zw} \\
&= \pmat{\alpha \theta \Zv \otimes \Zw + \Zv \otimes \Zw \\ \alpha \Zv \otimes \Zw + \lambda \Zv \otimes \Zw} \\
&= \pmat{(\alpha \theta + 1) \Zv \otimes \Zw \\ (\alpha + \lambda)\Zv \otimes \Zw } .
\end{split}
\]
But $\alpha$ has been carefully chosen such that $\alpha + \theta = \delta$ and $\alpha\lambda + 1 = \delta \alpha$, so
\[
A \pmat{\alpha \Zv \otimes \Zw \\ \Zv \otimes \Zw} = \delta \pmat{\alpha \Zv \otimes \Zw \\ \Zv \otimes \Zw}.
\]
Then $U$ is an eigenbasis for $A$, as claimed, and the eigenvalues of $A$ are the solution for $\delta$ in Equation (\ref{eqn:eroots}) where $\theta$ and $\lambda$ range over the eigenvalues of $C_k$. We use the quadratic formula to see that
\[
    \delta = \frac{\theta + \lambda \pm \sqrt{(\theta + \lambda)^2 - 4(\theta \lambda - 1) }}{2}
           = \frac{\theta + \lambda \pm \sqrt{(\theta - \lambda)^2 + 4  }}{2}.
\]
The eigenvalues of $C_m$ are $2\cos \tfrac{2\pi j}{m}$ for $j \in \ints_m$ (see \cite{BH} or \cite{CDS95}). Putting these values in place of $\theta$ and $\lambda$, we obtain the expressions given by the theorem.
\end{proof}

By restricting our attention to the eigenvalue 1, we have the following consequence of Theorem \ref{thm:Tm}.

\begin{corollary}
\label{cor:Tm eigenvalue 1}
The multiplicity of\/ $1$ as an eigenvalue of the graph $T_{m}$ is equal to the number of solutions $(j, \ell)$ to the equation:
\begin{equation}\label{eqn:roots1}
\cos \tfrac{2\pi j}{m} + \cos \tfrac{2\pi \ell}{m} = 2 \cos \tfrac{2\pi j}{m} \cos \tfrac{2\pi \ell}{m}
\end{equation}
where $j, \ell \in \{0, \ldots m -1\}$.
\end{corollary}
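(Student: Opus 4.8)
The plan is to read the answer straight off Theorem~\ref{thm:Tm} and, more precisely, off the eigenbasis constructed in its proof, being careful about multiplicities. Recall that the proof of Theorem~\ref{thm:Tm} produces an explicit eigenbasis $U$ of $A(T_m)$ of size $2m^2$: fixing an eigenbasis $V=\{\Zv_0,\dots,\Zv_{m-1}\}$ of $C_m$ with $\Zv_j$ having eigenvalue $2\cos\tfrac{2\pi j}{m}$, and writing $\lambda=2\cos\tfrac{2\pi j}{m}$, $\theta=2\cos\tfrac{2\pi\ell}{m}$ for the eigenvalues attached to the ordered pair $(\Zv_j,\Zv_\ell)$, each such pair contributes the two vectors $\pmat{\alpha\,\Zv_j\otimes\Zv_\ell \\ \Zv_j\otimes\Zv_\ell}$ with $\alpha=\delta-\theta$, one for each of the two roots $\delta$ of the quadratic \eqref{eqn:eroots}, and each of these is an eigenvector of $A(T_m)$ for the corresponding $\delta$. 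Indexing the pairs by $(j,\ell)\in\{0,\dots,m-1\}^2$ records the multiplicities of the eigenvalues of $C_m$ correctly, so the multiplicity of any number as an eigenvalue of $T_m$ is just the number of basis vectors in $U$ attaining it.

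First I would note that for each fixed pair $(j,\ell)$ the two roots $\delta_\pm$ of \eqref{eqn:eroots} are distinct: the proof of Theorem~\ref{thm:Tm} shows the discriminant equals $(\theta-\lambda)^2+4>0$. Hence the two vectors of $U$ attached to $(j,\ell)$ have different eigenvalues, so at most one of them is an eigenvector for $1$, and exactly one of them is precisely when $\delta=1$ satisfies \eqref{eqn:eroots}. Substituting $\delta=1$ into \eqref{eqn:eroots} gives $1-(\theta+\lambda)+\theta\lambda-1=0$, i.e.\ $\theta\lambda=\theta+\lambda$; plugging in $\theta=2\cos\tfrac{2\pi\ell}{m}$ and $\lambda=2\cos\tfrac{2\pi j}{m}$ and dividing by $2$ turns this into exactly \eqref{eqn:roots1}. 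So the pair $(j,\ell)$ contributes an eigenvector for eigenvalue $1$ if and only if $(j,\ell)$ solves \eqref{eqn:roots1}, and when it does it contributes exactly one.

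Finally, since $U$ is a basis of $\re^{2m^2}$, the multiplicity of $1$ as an eigenvalue of $T_m$ equals the number of vectors in $U$ that are eigenvectors for $1$, which by the previous step is exactly the number of pairs $(j,\ell)\in\{0,\dots,m-1\}^2$ satisfying \eqref{eqn:roots1}, as claimed. I do not expect a genuine obstacle here; the one point that needs care is the multiplicity bookkeeping — ensuring that each solution $(j,\ell)$ is counted once and that no solution contributes $2$ — and this is precisely what the strict inequality $(\theta-\lambda)^2+4>0$ secures, since it forbids $\delta=1$ from being a double root.
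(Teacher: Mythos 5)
Your proof is correct and follows essentially the same route as the paper: both count, via the eigenbasis of Theorem \ref{thm:Tm}, the pairs $(j,\ell)$ whose associated quadratic has $1$ as a root. The only real difference is that you substitute $\delta=1$ directly into \eqref{eqn:eroots}, which neatly avoids the sign and $a=b$ versus $a\ne b$ casework the paper performs on $a+b-1=\pm\sqrt{(a-b)^2+1}$, and your use of the strictly positive discriminant to ensure each pair contributes at most one eigenvector for $1$ is exactly the bookkeeping point that makes the count correct.
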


\begin{proof}
Let $a = \cos\tfrac{2\pi j}{m}$ and $b = \cos\tfrac{2\pi \ell}{m}$. By Theorem \ref{thm:Tm}, we are looking for the number of pairs $(j,\ell)$ for which $a+b-1=\pm\sqrt{(a-b)^2+1}$. For $a=b$, this is satisfied precisely when $a=b=1$ and when $a=b=0$, both of which provide solutions for (\ref{eqn:roots1}).
On the other hand, if $a\ne b$, then the value under the square root is bigger than 1, which implies that the solutions must satisfy the equation $a+b-1 = -\sqrt{(a-b)^2+1}$. For values $a,b$ that are smaller or equal to 1, this is equivalent to $(a+b-1)^2 = (a-b)^2 +1$, which holds if and only (\ref{eqn:roots1}) holds.
\end{proof}

\begin{theorem} $T_m$ has $1$ as a simple eigenvalue if and only if $4 \nmid m$ and $5 \nmid m$. \end{theorem}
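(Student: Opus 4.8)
By Corollary~\ref{cor:Tm eigenvalue 1}, the multiplicity of $1$ as an eigenvalue of $T_m$ equals the number of pairs $(j,\ell)\in\{0,\dots,m-1\}^2$ satisfying \eqref{eqn:roots1}, and $(0,0)$ is always such a pair. Hence $1$ is simple for $T_m$ if and only if \eqref{eqn:roots1} has no solution other than $(0,0)$, so the theorem is equivalent to the assertion that \eqref{eqn:roots1} admits a nontrivial solution precisely when $4\mid m$ or $5\mid m$.

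For the contrapositive of the forward implication I would simply write down the extra solutions. If $4\mid m$, the pair $(m/4,m/4)$ satisfies \eqref{eqn:roots1} because $\cos\tfrac{\pi}{2}=0$; if $5\mid m$, the pair $(m/5,2m/5)$ does, since with $\cos\tfrac{2\pi}{5}=\tfrac{\sqrt5-1}{4}$ and $\cos\tfrac{4\pi}{5}=-\tfrac{\sqrt5+1}{4}$ both sides of \eqref{eqn:roots1} equal $-\tfrac12$. In either case the multiplicity of $1$ is at least $2$, so $1$ is not simple.

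The converse is the substance, and it amounts to classifying the solutions of \eqref{eqn:roots1}. I would rewrite \eqref{eqn:roots1} as $\bigl(2\cos\tfrac{2\pi j}{m}-1\bigr)\bigl(2\cos\tfrac{2\pi\ell}{m}-1\bigr)=1$, i.e., by the product-to-sum identity, as the vanishing sum of at most eight $m$th roots of unity $\zeta^{j}+\zeta^{-j}+\zeta^{\ell}+\zeta^{-\ell}=\zeta^{j+\ell}+\zeta^{-j-\ell}+\zeta^{j-\ell}+\zeta^{\ell-j}$, where $\zeta=e^{2\pi i/m}$. Since $2\cos\tfrac{2\pi j}{m}=\zeta^{j}+\zeta^{-j}$ depends only on the order of $\zeta^{j}$, any solution already holds modulo the $d$ equal to the least common multiple of the orders of $\zeta^{j}$ and $\zeta^{\ell}$, so it suffices to treat the primitive solutions, those with $d=m$; for such a solution $j,\ell\neq0$, and since $2\cos\alpha-1\in(0,1]$ would force the reciprocal to be $\ge1$, hence $=1$ and $\alpha=0$, we also get $\cos\tfrac{2\pi j}{m},\cos\tfrac{2\pi\ell}{m}<\tfrac12$. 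Put $u=2\cos\tfrac{2\pi j}{m}-1$, so $u^{-1}=2\cos\tfrac{2\pi\ell}{m}-1$; both are algebraic units. Because $\mathbb{Q}(u)=\mathbb{Q}\bigl(2\cos\tfrac{2\pi j}{m}\bigr)$ is the maximal real subfield of a cyclotomic field and contains $u^{-1}$, and symmetrically $\mathbb{Q}(u^{-1})$ contains $u$, the two elements generate the same field $\mathbb{Q}(\zeta_g)^{+}$; apart from the ambiguity $g\leftrightarrow 2g$ (a separate, similar case that yields nothing), $u^{-1}$ is then a Galois conjugate of $u$. Chasing this, and using that $\cos$ is even, reduces the problem to finding $g$ and $c$ with $\gcd(c,g)=1$ for which $\{1,c\}$ and $\{c-1,c+1\}$ give the same sum of $g$th roots of unity. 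When $g$ is an odd prime, $\zeta^{1},\dots,\zeta^{g-1}$ are $\mathbb{Q}$-linearly independent, so the exponent multisets must coincide; this forces $c\equiv\pm2\pmod g$, and then comparing $\{\pm1,\pm2\}$ with $\{\pm1,\pm3\}$ forces $g\mid 5$, i.e.\ $g=5$. The prime-power cases and the remaining small values $g\in\{1,2,3,4,6\}$ are handled directly---Niven's theorem disposes of the latter---and produce only the trivial $g=1$ and the value $g=4$ (arising from $u=-1$). Hence a primitive nontrivial solution of \eqref{eqn:roots1} exists only for $g\in\{4,5\}$, so \eqref{eqn:roots1} has a nontrivial solution exactly when $4\mid m$ or $5\mid m$, which is the converse.

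I expect the main obstacle to be this classification, and within it the composite and prime-power cases: one must pin down the ``common real cyclotomic field'' step (using the standard fact that $\mathbb{Q}(\zeta_n)^{+}$ determines $n$ up to the ambiguity $n\leftrightarrow 2n$ with $n$ odd) and solve the identity $\{1,c\}\leftrightarrow\{c-1,c+1\}$ in $\mathbb{Z}[\zeta_g]$ when that ring supports further relations among roots of unity. It is also worth noting that being a unit is not in itself enough: for example $2\cos\tfrac{2\pi}{7}-1$ is an algebraic unit, yet its reciprocal increased by $1$ lies outside $[-2,2]$ and so is not of the form $2\cos\beta$---the range bound and the requirement that the reciprocal again be a cosine value both have to be used.
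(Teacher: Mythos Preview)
Your reduction via Corollary~\ref{cor:Tm eigenvalue 1} and the explicit nontrivial solutions for $4\mid m$ and $5\mid m$ coincide with the paper's Lemma~\ref{lem:45cos}. The converse is where the approaches diverge. The paper rewrites \eqref{eqn:roots1} as $\cos x+\cos y=\cos(x+y)+\cos(x-y)$ and feeds this four-term relation directly into the Conway--Jones classification \cite[Theorem~7]{CoJo76} of rational linear combinations of cosines at rational angles, then carries out a fairly long case analysis (Lemma~\ref{lem:cossums}). You instead notice the factorisation $(2\cos\tfrac{2\pi j}{m}-1)(2\cos\tfrac{2\pi\ell}{m}-1)=1$---an observation the paper does not make---and argue through units and Galois conjugacy in real cyclotomic fields.

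Your route is conceptually attractive, and the odd-prime case is handled cleanly; but, as you yourself acknowledge, the prime-power cases and the $g\leftrightarrow 2g$ branch remain sketches. The linear-independence step you use for prime $g$ breaks down once $\mathbb{Z}[\zeta_g]$ supports further $\mathbb{Q}$-relations among roots of unity (already at $g=8$ or $g=9$), and excluding coincidences of the exponent multisets $\{\pm1,\pm c\}$ and $\{\pm(c-1),\pm(c+1)\}$ modulo those relations is genuine work---in effect you would be reproving the relevant slice of Conway--Jones by hand. The paper's approach is therefore shorter, since it invokes that theorem as a black box; what your factorisation buys is a cleaner explanation of why $4$ and $5$ are the exceptional moduli ($u=-1$ is the only nontrivial self-inverse of the form $2\cos\theta-1$, and $g=5$ is the smallest $g$ for which $2\cos\tfrac{2\pi}{g}-1$ has a Galois conjugate equal to its own inverse).
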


The theorem follows from Corollary \ref{cor:Tm eigenvalue 1} by Lemmas \ref{lem:45cos} and \ref{lem:cossums} that are proved in Appendix~\ref{app:cosines}.

\appendix

\section{Sums of cosines}\label{app:cosines}

The multiplicity of $1$ as an eigenvalue of the graph $T_{m}$ is equal to the number of solutions $(j, \ell)$ to the equation  \eqref{eqn:roots1}. We observe that, if we set $\ell=kj$, we obtain exactly \eqref{eq:1eqn} in Section \ref{sect:genPetersen}. Note that this equation is always satisfied for $j = \ell = 0$, which we will refer to as the \textsl{trivial} solution.

We will first determine that when $4 |m$ or $5|m$, equation \eqref{eqn:roots1} has non-trivial solutions. Then, we will show that there are no non-trivial solutions in the other cases.

\begin{lemma}\label{lem:45cos} If $4$ divides $m$, then  \eqref{eqn:roots1} has at least $4$ non-trivial solutions. If $5$ divides $m$, then \eqref{eqn:roots1} has at least $8$ non-trivial solutions.
\end{lemma}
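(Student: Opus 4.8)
The plan is to produce the required solutions explicitly, since \eqref{eqn:roots1} is a symmetric, low-degree relation and the relevant cosine values are classical. Writing $a=\cos\tfrac{2\pi j}{m}$ and $b=\cos\tfrac{2\pi\ell}{m}$, equation \eqref{eqn:roots1} reads $a+b=2ab$, and a pair $(j,\ell)$ is a solution exactly when the corresponding pair $(a,b)$ of cosine values satisfies this relation. So the whole task reduces to locating enough indices $j,\ell\in\{0,\dots,m-1\}$, none of them equal to $0$, whose cosines land on a known solution of $a+b=2ab$.

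For the case $4\mid m$, I would use the solution $a=b=0$, which plainly satisfies $0+0=2\cdot 0\cdot 0$. The cosine $\cos\tfrac{2\pi j}{m}$ vanishes precisely when $j=m/4$ or $j=3m/4$, both integers in $\{1,\dots,m-1\}$ because $4\mid m$. Hence the four pairs $(j,\ell)\in\{m/4,\,3m/4\}^2$ are all non-trivial solutions of \eqref{eqn:roots1}, which gives the first assertion.

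For the case $5\mid m$, I would use the two values $c_1=\cos\tfrac{2\pi}{5}=\tfrac{\sqrt5-1}{4}$ and $c_2=\cos\tfrac{4\pi}{5}=\tfrac{-\sqrt5-1}{4}$. A short computation gives $c_1+c_2=-\tfrac12$ and $c_1c_2=-\tfrac14$, so $c_1+c_2=2c_1c_2$; that is, $(a,b)=(c_1,c_2)$ and, by the symmetry of the equation, $(a,b)=(c_2,c_1)$ both solve $a+b=2ab$. Now $\cos\tfrac{2\pi j}{m}=c_1$ for $j\in\{m/5,\,4m/5\}$ and $\cos\tfrac{2\pi\ell}{m}=c_2$ for $\ell\in\{2m/5,\,3m/5\}$, all four being integers in $\{1,\dots,m-1\}$ since $5\mid m$. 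This yields the $2\times 2=4$ non-trivial solutions with $(a,b)=(c_1,c_2)$, and a further $4$ with $(a,b)=(c_2,c_1)$; the two families are disjoint because the first forces $j\in\{m/5,4m/5\}$ while the second forces $j\in\{2m/5,3m/5\}$. Altogether this is $8$ non-trivial solutions, as claimed.

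The only thing needing care here is the bookkeeping: checking that each listed $j$ and $\ell$ is a genuine index in $\{0,\dots,m-1\}\setminus\{0\}$ and that the two families in the $5\mid m$ case do not collide. There is no real obstacle, since the harder companion statement — that these exhaust the non-trivial solutions when $4\mid m$ or $5\mid m$ — is the content of Lemma~\ref{lem:cossums} and is dealt with separately.
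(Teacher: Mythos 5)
Your proof is correct and follows essentially the same route as the paper: exhibit the four solutions $(j,\ell)\in\{m/4,3m/4\}^2$ via $\cos\tfrac{\pi}{2}=\cos\tfrac{3\pi}{2}=0$, and the eight solutions $(j,\ell)$ with one index in $\{m/5,4m/5\}$ and the other in $\{2m/5,3m/5\}$ via the identity $\cos\tfrac{2\pi}{5}+\cos\tfrac{4\pi}{5}=2\cos\tfrac{2\pi}{5}\cos\tfrac{4\pi}{5}=-\tfrac12$. The computations and the bookkeeping match the paper's argument.
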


\begin{proof}
We will describe additional solutions to \eqref{eqn:roots1} when $m$ is divisible by $4$ and when $m$ is divisible by $5$.
Note that
\[
\cos \tfrac{\pi}{2} = \cos\tfrac{3\pi}{2} = 0.
\]
Then, if $m = 4a$ for some $a$, then for each
\[
(j,\ell) \in \{(a,a),(a,3a),(3a,a),(3a,3a)\}
\]
we obtain a solution to \eqref{eqn:roots1}.

Similarly, suppose now that $m = 5a$ for some $a$. We note that
\[
\cos\tfrac{2\pi}{5} = \cos\tfrac{8\pi}{5} = \frac{-1 + \sqrt{5}}{4} \quad \text{ and } \quad \cos\tfrac{4\pi}{5} = \cos\tfrac{6\pi}{5} = \frac{-1-\sqrt{5}}{4}
\]
and we see that
\[
\cos\tfrac{2\pi}{5} + \cos\tfrac{4\pi}{5} = \frac{-1 + \sqrt{5}}{4}  + \frac{-1 - \sqrt{5}}{4} = -\frac{1}{2}
\]
and
\[
2\,\cos\tfrac{2\pi}{5} \cos\tfrac{4\pi}{5} = \frac{\left(-1 + \sqrt{5}\,\right)\left(-1 - \sqrt{5}\,\right)}{8} = -\frac{1}{2}.
\]
Then, let $A = \{a, 4a\}$ and $B = \{2a, 3a\}$. For every choice of $j\in A$ and $\ell \in B$ (or vice versa), we obtain distinct solution $(j,\ell)$ to Equation \eqref{eqn:roots1}.
\end{proof}

Solutions of Equation \eqref{eqn:roots1} can be limited further by using an old result of Conway and Jones \cite{CoJo76}.

\begin{lemma}\label{lem:cossums} If $4 \nmid m$ and $4 \nmid m$, then \eqref{eqn:roots1} has only the trivial solution. Further, if $4 | m$, the only non-trivial solution is $\left\{ \frac{j}{m},\frac{\ell}{m} \right\} \subseteq \left\{ \frac{1}{4},\frac{3}{4} \right\}$. If  $ m = 5a$, the only non-trivial solutions are $(j,\ell) \in \{a, 4a\}\times\{2a, 3a\}\cup \{2a, 3a\}  \times \{a, 4a\}$.  \end{lemma}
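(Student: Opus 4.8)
The plan is to put \eqref{eqn:roots1} into a standard ``vanishing sum of cosines'' form and then invoke the classification theorem of Conway and Jones \cite{CoJo76}; the real work is a case analysis adapted to the very rigid shape of the four angles that occur. First I would rewrite the equation: applying $2\cos x\cos y=\cos(x-y)+\cos(x+y)$ to the right-hand side of \eqref{eqn:roots1} with $x=\tfrac{2\pi j}{m}$ and $y=\tfrac{2\pi\ell}{m}$ turns it into the equivalent relation
\[
   \cos\tfrac{2\pi j}{m}+\cos\tfrac{2\pi\ell}{m}-\cos\tfrac{2\pi(j+\ell)}{m}-\cos\tfrac{2\pi(j-\ell)}{m}=0,
\]
which I will call $(\star)$; multiplying by $2$ it says that the two exponent multisets $\{\pm j,\pm\ell\}$ and $\{\pm(j+\ell),\pm(j-\ell)\}$ give equal sums of $m$-th roots of unity, i.e.\ a $\pm1$-combination of at most eight $m$-th roots of unity that vanishes. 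Each of the substitutions $j\mapsto m-j$, $\ell\mapsto m-\ell$, $j\leftrightarrow\ell$ leaves $(\star)$ invariant (they permute the four cosine terms and/or negate an angle), so it suffices to determine all solutions with $0\le j\le\ell\le m/2$ and then close up under this group. One checks that each of the two claimed families is a single such orbit, with fundamental representative $(j,\ell)=(m/4,m/4)$ when $4\mid m$ and $(j,\ell)=(m/5,2m/5)$ when $5\mid m$; by Lemma~\ref{lem:45cos} these are indeed solutions, so it remains to prove there are no others (and in particular only the trivial solution when $4\nmid m$ and $5\nmid m$).

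Next I would dispose of the configurations in which $(\star)$ is not a genuine relation among four distinct cosines. If $j=\ell$, then $(\star)$ becomes $2\cos\tfrac{2\pi j}{m}-\cos\tfrac{4\pi j}{m}-1=0$; writing $c=\cos\tfrac{2\pi j}{m}$ and $\cos\tfrac{4\pi j}{m}=2c^{2}-1$ this is $2c(1-c)=0$, so $c\in\{0,1\}$, giving $j=\ell=0$ or ($4\mid m$ and $j=\ell=m/4$). If one of the four cosines equals $0$, its angle is $\pm\pi/2$, which forces $4\mid m$; substituting (say $j=m/4$, so that $\cos\tfrac{2\pi(j+\ell)}{m}+\cos\tfrac{2\pi(j-\ell)}{m}=0$ automatically) collapses $(\star)$ to $\cos\tfrac{2\pi\ell}{m}=0$ and again produces only the $4\mid m$ family. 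Finally, if two of the four cosine terms coincide or are negatives of one another, then $(\star)$ breaks into two vanishing pairs; translating each identity $\cos\tfrac{2\pi a}{m}=\pm\cos\tfrac{2\pi b}{m}$ into a congruence ($a\equiv\pm b\pmod m$, or $a\pm b\equiv m/2\pmod m$) and solving the resulting linear systems in $j,\ell$ modulo $m$, while tracking the relevant gcd's, yields exactly the trivial solution, the $4\mid m$ family, and --- from a system equivalent to $5\ell\equiv0\pmod m$ --- the $5\mid m$ family.

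What then remains is the possibility that, after collecting equal terms, the distinct cosine values in $(\star)$ satisfy a rational linear relation with nonzero coefficients (now lying in $\{\pm1,\pm2\}$) involving at most four cosines of rational multiples of $2\pi$ and having no vanishing proper sub-relation. To such a primitive relation Theorem~7 of Conway and Jones \cite{CoJo76} applies: up to an overall rational factor it must be one of a short explicit list of exceptional relations, whose angles have denominators among $3,5,7,15,21,30,\dots$. For each entry of that list I would check whether its angles can be written --- after reduction into $[0,\pi]$ and rescaling by $\tfrac{m}{2\pi}$ --- as $\{j,\ell,j+\ell,|j-\ell|\}$ with the sign pattern coming from $(\star)$; the constraints that one exponent be the sum of two of the others, a fourth be their difference, and all exponents share the denominator $m$ are extremely restrictive, and I expect they will rule out every exceptional relation, so that this case contributes nothing new. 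Combined with Lemma~\ref{lem:45cos} this gives the lemma.

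The main obstacle is this last step: accurately marshalling the Conway--Jones list --- including the several three- and four-term exceptional relations with denominators $15$, $21$ and $30$ --- and verifying in each case that it is incompatible with the shape $\{j,\ell,j+\ell,|j-\ell|\}$ and the fixed sign pattern. A secondary, unavoidable chore is the bookkeeping in the reducible case, where several linear systems modulo $m$ must be solved completely, with care about divisibility conditions, to be sure that the $4\mid m$ and $5\mid m$ families are recovered exactly, with no spurious or omitted solutions.
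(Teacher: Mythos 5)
Your overall strategy coincides with the paper's: rewrite \eqref{eqn:roots1} as the vanishing four-term sum $\cos x+\cos y-\cos(x+y)-\cos(x-y)=0$ with $x=\frac{2\pi j}{m}$, $y=\frac{2\pi\ell}{m}$, normalize by the obvious symmetries to $0\le x\le y\le\pi$, and classify via Theorem~7 of Conway and Jones. The gap is in the case split you perform before invoking that theorem. You dispose of the configurations in which a proper sub-sum \emph{vanishes} (a cosine equal to $0$, or two terms that coincide or are opposite and so cancel in pairs), and then treat everything else as a single ``primitive'' four-term relation to be matched against the Conway--Jones list. But the primitivity hypothesis in Conway--Jones is that no proper sub-sum is \emph{rational}, not merely that none vanishes. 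The configuration in which a proper sub-sum equals a nonzero rational falls through both of your cases: it is not a cancelling pair, and the four-term exceptional relations of Theorem~7 do not apply to it. This case is not vacuous --- it is exactly where the $5\mid m$ phenomenon lives, since $\cos\frac{2\pi}{5}+\cos\frac{4\pi}{5}=-\frac12$, and it is also where a single cosine equal to $\pm\frac12$ or $\pm1$ must be excluded; your treatment of ``one cosine is rational'' covers only the value $0$.

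This missing territory is where the paper's proof spends most of its effort. Its Case (i) runs through $\cos x\in\{\pm1,\pm\frac12,0\}$, then through the sub-cases where one or both of $\cos(x\pm y)$ are rational; the latter forces either the two-term relation $\cos\frac{\pi}{5}-\cos\frac{2\pi}{5}=\frac12$ or one of the three-term exceptional relations (a)--(d) of Conway--Jones (with denominators $7$ and $15$), each of which must then be shown incompatible with the constrained angle shape $\{x,y,x+y,x-y\}$ --- a delicate computation involving the folding map $\phi$. Its Case (ii) handles two irrational cosines with rational sum, again landing on the $\frac{\pi}{5}$ relation and hence $5\mid m$. To repair your argument you would need to redo the decomposition according to which minimal sub-sums are \emph{rational} (rather than zero) and then analyse how two rational sub-relations, or a rational cosine plus a three-term relation, can combine to produce the specific signs and the additive structure of the four angles; that analysis is essentially the content of the paper's Cases (i) and (ii) and cannot be skipped.
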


\begin{proof}
We will make extensive use of \cite[Theorem 7]{CoJo76}, which gives the complete description of solutions to the equation:
\begin{equation}\label{eq:Conway}
A \cos 2\pi a + B \cos 2\pi b + C \cos 2 \pi c + D \cos 2 \pi d = E
\end{equation}
where all variables $A,\dots,E$ and $a,b,c,d$ are rational numbers. We may use an elementary trigonometric identity for cosine of angle sums to rewrite \eqref{eqn:roots1} as follows:
\begin{equation}\label{eqn:cos}
\cos x + \cos y =2 \cos x \cos y  = \cos(x+y) + \cos(x-y)
\end{equation}
where $x = 2\pi \frac{j}{m}$ and $y = 2 \pi \frac{\ell}{m}$. This is a special case of \eqref{eq:Conway} with $A=B=1$, $C=D=-1$ and $E=0$. Observe that if $y' = 2\pi - y$, then
\[
\cos(x + y') + \cos(x-y') = \cos(x + 2\pi - y) + \cos(x- 2\pi + y) = \cos(x-y) + \cos(x+y).
\]
Without loss of generality, we may therefore assume that $0 \leq x \leq y \leq \pi$. Let
\[
\C = \{ \cos x, \cos y, \cos(x+y) , \cos(y-x)\}.
\]
We will consider the following cases:
\begin{enumerate}[(i)]
\item There exists $c \in \C$ such that $c$ is rational.
\item $\C \cap \rats = \emptyset$ and some two elements of $\C$ have rational sum.
\item No proper subset of $\C$ has rational sum.
\end{enumerate}

\medskip
\noindent
{\bf Case (i).} It is known that if $\cos \theta \in \rats$ and $\theta$ is a rational multiple of $\pi$, then $\cos \theta \in \{\pm 1, \pm \frac{1}{2},0\}$.

We first suppose $\cos x \in \rats$.
If $\cos x = 1$, then (\ref{eqn:cos}) implies that $1 + \cos y = 2 \cos y$. Then, $x = y = 0$, giving the trivial solution.
If $\cos x = \frac{1}{2}$, then (\ref{eqn:cos}) implies that $\frac{1}{2} + \cos y = \cos y$, which is impossible. Similarly, if $\cos x = -\frac{1}{2}$, then (\ref{eqn:cos}) implies that $-\frac{1}{2} + \cos y =  - \cos y$ and $\cos y =  \frac{1}{4} \in \rats$, which is not possible.

If $\cos x = -1$, then (\ref{eqn:cos}) implies that $-1 + \cos y = -2 \cos y$, which gives that $\cos y = \frac{1}{3}$. Since $y$ is a rational multiple of $\pi$, this is impossible.
If $\cos x = 0$, then (\ref{eqn:cos}) implies that $\cos y = 0$.  In this case, $x,y \in \{\frac{\pi}{2}, \frac{3\pi}{2}\}$ and $\frac{j}{m}, \frac{\ell}{m} \in \{\frac{1}{4},\frac{3}{4}\}$. Then $4 | m$ and $\frac{j}{m}, \frac{\ell}{m} \in \{\frac{1}{4},\frac{3}{4}\}$ give all nontrivial solutions when some $c \in C$ is rational.

Since (\ref{eqn:cos}) is symmetric in $x$ and $y$, we may now assume that both $\cos x$ and $\cos y$ are irrational.

If both $\cos(x +y)$ and $\cos(x-y)$ are rational, then $\cos x + \cos y \in \rats$ and $\cos x, \cos y \notin \rats$. Note that $x, y \notin \{ 0, \pi, \frac{\pi}{2}\}$.  Let $\phi(\theta)$ be the following:
\[
\phi(\theta) = \begin{cases} \theta & \text{if } 0 < \theta < \frac{\pi}{2}; \\
							\pi - \theta & \text{if }  \frac{\pi}{2}< \theta < \pi; \\
							\theta - \pi & \text{if }  \pi < \theta < \frac{3\pi}{2}; \\
							 2\pi - \theta & \text{if }  \frac{3\pi}{2}< \theta < 2\pi. \end{cases}
\]
Then we note that $\cos(\phi(x)) \pm \cos(\phi(y)) \in \rats$ and $\cos(\phi(x)), \cos(\phi(y)) \notin \rats$.
We can apply Theorem 7 of \cite{CoJo76} to obtain that $ a\cos (\phi(x)) + b\cos(\phi(y)) = q$ for some $q \in \rats$ is proportional to $\cos\frac{\pi}{5} - \cos \frac{2\pi}{5} = \frac{1}{2}$. This implies that $5$ divides $m$ and we can see the only additional solutions are the ones found in Lemma \ref{lem:45cos}.

In the remainder of Case (i), we may assume that exactly one of  $\cos(x +y)$ and $\cos(x-y)$ is rational. Let $\theta$ be the corresponding argument. We may also assume that no rational linear combination of a proper subset of $\C \setminus \{\cos\theta\}$ has a rational sum. Let $\tau$ be the argument of the irrational element of $\{ \cos(x +y), \cos(x-y)\}$. Since cosinus is an even function, we may take $y-x$ instead of $x-y$ an assume that $0 \leq \tau, \theta \leq \pi$. We have that
\[
\cos x + \cos y - \cos \tau = \cos \theta \in \rats
\]
and let $a,b,c \in \{\pm1\}$ be such that
\begin{equation}\label{eqn:cos3terms}
a\cos (\phi(x)) + b\cos (\phi(y)) + c\cos (\phi(\tau)) = \cos \theta.
\end{equation}
Theorem 7 of \cite{CoJo76} gives that (\ref{eqn:cos3terms}) is proportional to one of the following:
\begin{enumerate}[(a)]
\item $-\cos \delta +\cos(\frac{\pi}{3} - \delta) + \cos(\frac{\pi}{3} + \delta) = 0$ for some $0 < \delta < \frac{\pi}{6}$;
\item $\cos\frac{\pi}{7} - \cos\frac{2\pi}{7} + \cos\frac{3\pi}{7} = \frac{1}{2}$;
\item $\cos\frac{\pi}{5} - \cos\frac{\pi}{15} + \cos\frac{4\pi}{15} = \frac{1}{2}$; and
\item $-\cos\frac{2\pi}{5} + \cos\frac{2\pi}{15} - \cos\frac{7\pi}{15} = \frac{1}{2}$.
\end{enumerate}

Observe that if (\ref{eqn:cos3terms}) is proportional to either (c) or (d), then $5$ divides $m$. If (\ref{eqn:cos3terms}) is proportional to (c), then we have that $\theta = \frac{\pi}{3}$, $\phi(\tau) = \frac{\pi}{15}$ and $\{\phi(x), \phi(y) \} = \{\frac{4\pi}{15}, \frac{5\pi}{15}\}$. We can see that there are no solutions for $x,y$ in this case. If (\ref{eqn:cos3terms}) is proportional to (d), then we have that $\theta = \frac{2\pi}{3}$, $\phi(\tau) = \frac{2\pi}{15}$ and $\{\phi(x), \phi(y) \} = \{\frac{6\pi}{15}, \frac{7\pi}{15}\}$. There are also no solutions for $x,y$ in this case.

To finish Case (i), we will show that (\ref{eqn:cos3terms}) cannot be proportional to either (a) or (b).

If (\ref{eqn:cos3terms}) is proportional to (a), then $\cos\theta = 0$ and thus $\theta \in \{\frac{\pi}{2}, \frac{3\pi}{2}\}$. We have three cases: $x+y = \frac{\pi}{2}$, $y-x = \frac{\pi}{2}$, or $x+y = \frac{3\pi}{2}$. If $\theta = x+y = \frac{\pi}{2}$, then $x\leq y \leq \frac{\pi}{2}$ so $\phi(z) = z$ for $z \in \{x,y, y-x\}$. Since (\ref{eqn:cos3terms}) is proportional to (a), $\{x,y,y-x\} = \{\delta, \frac{\pi}{3} \pm \delta \}$. If we sum all three elements, we obtain $x+y + y -x  =2y = \delta + \frac{2\pi}{3}$ and so $y = \frac{\delta}{2} + \frac{\pi}{3} \notin \{\delta, \frac{\pi}{3} \pm \delta \}$, a contradiction.

If $y-x = \theta = \frac{\pi}{2}$, then $x \leq \frac{\pi}{2} = \phi(x)$. If $x = \delta$, then $y = \frac{\pi}{2} + \delta$ and $\phi(y) = \pi -y = \frac{\pi}{2} - \delta$. We see that  $\frac{\pi}{2} - \delta \neq  \frac{\pi}{3} - \delta$, so $y = \frac{\pi}{3} + \delta = \frac{\pi}{2} - \delta$. Then $x = \phi(x) = \frac{\pi}{12}$. It follows that $4|m$, a contradiction. If $x = \frac{\pi}{3} - \delta$, then $y = \frac{\pi}{2} + x  = \frac{5\pi}{6} - \delta$ and $\phi(y) = \pi-y = \frac{\pi}{6} + \delta \notin \{\delta, \frac{\pi}{3} + \delta\}$, a contradiction. If $x = \frac{\pi}{3} + \delta$, then $y = \frac{\pi}{2} + x  = \frac{5\pi}{6} + \delta$ and $\phi(y) = \pi-y = \frac{\pi}{6} - \delta$. Then $\phi(y) = \delta$ and $\delta = \frac{\pi}{12}$. We obtain that $x = \frac{5\pi}{12}$ and so $4|m$, a contradiction.

If $x+y = \theta =\frac{3\pi}{2}$ then $y = \frac{3\pi}{2} -x$ and $\pi \geq x \geq \frac{\pi}{2}$ so $\phi(x) = \pi -x$. See Table \ref{tab:possvals} for all possible values of $x$, $y$ and $y-x$, based on the possible values of $\phi(x)$. If $\phi(x) = \frac{\pi}{3} + \delta$ then $\phi(y) = \frac{\pi}{6} - \delta$, which must equal $\delta$, hence $\delta = \frac{\pi}{12}$ and $y = \frac{13\pi}{12}$ and $4|m$, a contradiction. If $\phi(x) = \frac{\pi}{3} - \delta$ then $\phi(y) = \frac{\pi}{6} + \delta \notin \{\delta, \frac{\pi}{3} + \delta\}$, a contradiction. If $\phi(x) = \delta$ then $y-x  = - \frac{\pi}{2}  + 2\delta \geq 0$, since $y \geq x$. But $\delta \leq \frac{\pi}{6}$, so this cannot happen.
\begin{table}[h]
\centering
\begin{tabular}{|c|c|c|c|}
\hline
$\phi(x) = \pi - x$ & $\delta$ & $\frac{\pi}{3} - \delta$ & $\frac{\pi}{3} + \delta$ \\
\hline
$x$ & $\pi - \delta$ & $\frac{2\pi}{3} + \delta$ & $\frac{2\pi}{3} - \delta$ \\
$y$ & $\frac{\pi}{2} + \delta$ & $\frac{5\pi}{6} - \delta$  & $\frac{5\pi}{6} +\delta$ \\
$y-x$ & $-\frac{\pi}{2} + 2\delta$ & $\frac{\pi}{6} - 2\delta$  & $\frac{\pi}{6} +2\delta$ \\
\hline
\end{tabular}
\caption{Possible values of $\phi(x)$.}\label{tab:possvals}
\end{table}

If (\ref{eqn:cos3terms}) is proportional to (b), then $\cos\theta = \pm \frac{1}{2}$ and thus $\theta  \in \{\frac{\pi}{3}, \frac{2\pi}{3},\frac{4\pi}{3},\frac{5\pi}{3}\}$. Since $3$ and $7$ are prime, it is easy to see that any sum of two terms each of the form
\[
a \pi + b\,\tfrac{\pi}{7}
\]
where $a, b \in \ints$ will not be in the set $\{\frac{\pi}{3}, \frac{2\pi}{3},\frac{4\pi}{3},\frac{5\pi}{3}\}$. Thus, we cannot write $\theta$ as $x' +y'$ or $x'-y'$ for any $x',y'$ pre-images under $\phi$ of the arguments in (b), a contradiction.

\medskip
\noindent
{\bf Case (ii).} In this case, two of the elements of $C$ sum to a rational number, say $\cos\theta + \cos\tau$. Then, applying Theorem 7 of \cite{CoJo76} to $a\cos (\phi(\theta)) + b\cos (\phi(\tau))$ for $a,b \in \{\pm1\}$, we obtain that
\[a\cos (\phi(\theta)) + b\cos (\phi(\tau)) = \cos(\nicefrac{\pi}{5}) - \cos(\nicefrac{2\pi}{5})  = \frac{1}{2}.
\]
Here $5\mid m$ and we see again the only additional
solutions are the ones found in Lemma \ref{lem:45cos}.

\medskip
\noindent
{\bf Case (iii).} In this case, no proper subset $C$ has rational sum. We have that
\[
 \cos(\phi(x)) \pm \cos(\phi(y)) \pm \cos(\phi(x+y)) \pm \cos(\phi(x-y)) = 0
\]
(for some choice of the signs). We apply Theorem 7 of \cite{CoJo76} and see that this cannot hold, since all $4$ term sums in the theorem have non-zero sum. This completes the proof.
\end{proof}

\section{Computation on regular maps}\label{sec:app-computation}

We looked at the regular maps as given in the census of Marston Conder \cite{Con94, Conders}. For each map, we considered the map and its dual, and determined their vertex truncations. We restricted our computation to bipartite regular maps on at least $4$ vertices (since the 2-vertex case is completely covered in Section \ref{sect:genPetersen}) and with vertex (face) degree at least $3$. Note, the underlying graph of the multigraph does not necessarily have degree at least 3. Up to and including regular maps with $100$ edges, there are $351$ such regular maps. We determined the vertex truncations of these maps, as described in Section \ref{sec:regmaps}. Recall that the vertex truncation of a regular map with $e$ edges is a vertex-transitive cubic graph on $2e$ vertices that has $1$ as an eigenvalue. Amongst the $351$ considered vertex truncations of regular maps, $62$ have $1$ as a simple eigenvalue. Their properties are listed in Table \ref{tab:regmaps}.

\begin{center}
\begin{longtable}{|l|l|l|l|l|l|l|l|}
\caption{Vertex truncations of regular maps which have $1$ as a simple eigenvalue. The first seven columns give information about the regular map. The last column records if the truncation obtained is bipartite.  \label{tab:regmaps}}\\

\hline \multicolumn{1}{|c|}{\textbf{$|E|$}} & \multicolumn{1}{c|}{\textbf{$|V|$}} & \multicolumn{1}{c|}{\textbf{$|F|$}} &\multicolumn{1}{c|}{\textbf{Orientability}} &\multicolumn{1}{c|}{\textbf{Genus}} &\multicolumn{1}{c|}{\textbf{$d(v)$}}&\multicolumn{1}{c|}{\textbf{$d(f)$}} &\multicolumn{1}{c|}{\textbf{Truncation graph}}\\ \hline
\endfirsthead

\multicolumn{8}{c}%
{{\bfseries \tablename\ \thetable{} -- continued from previous page}} \\
\hline \multicolumn{1}{|c|}{\textbf{$|E|$}} & \multicolumn{1}{c|}{\textbf{$|V|$}} & \multicolumn{1}{c|}{\textbf{$|F|$}} &\multicolumn{1}{c|}{\textbf{Orientability}} &\multicolumn{1}{c|}{\textbf{Genus}} &\multicolumn{1}{c|}{\textbf{$d(v)$}}&\multicolumn{1}{c|}{\textbf{$d(f)$}} &\multicolumn{1}{c|}{\textbf{Truncation graph}}\\ \hline
\endhead

\hline \multicolumn{8}{|r|}{{Continued on next page}} \\ \hline
\endfoot

\hline \hline
\endlastfoot

12 & 8 & 6 & orientable & 0 & 3 & 4 &   \\
12 & 4 & 6 & orientable & 2 & 6 & 4 & bipartite \\
18 & 6 & 6 & orientable & 4 & 6 & 6 & bipartite \\
20 & 4 & 10 & orientable & 4 & 10 & 4 & bipartite \\
24 & 8 & 6 & orientable & 6 & 6 & 8 & bipartite \\
24 & 8 & 6 & orientable & 6 & 6 & 8 &   \\
24 & 16 & 6 & orientable & 2 & 3 & 8 &   \\
28 & 4 & 14 & orientable & 6 & 14 & 4 & bipartite \\
30 & 10 & 6 & orientable & 8 & 6 & 10 & bipartite \\
30 & 6 & 10 & orientable & 8 & 10 & 6 & bipartite \\
30 & 20 & 6 & non-orientable & 6 & 3 & 10 &   \\
36 & 4 & 18 & orientable & 8 & 18 & 4 & bipartite \\
36 & 24 & 12 & orientable & 1 & 3 & 6 &   \\
36 & 8 & 18 & orientable & 6 & 9 & 4 &   \\
36 & 12 & 6 & orientable & 10 & 6 & 12 & bipartite \\
40 & 8 & 10 & orientable & 12 & 10 & 8 & bipartite \\
42 & 6 & 14 & orientable & 12 & 14 & 6 & bipartite \\
42 & 14 & 6 & orientable & 12 & 6 & 14 & bipartite \\
44 & 4 & 22 & orientable & 10 & 22 & 4 & bipartite \\
48 & 16 & 6 & orientable & 14 & 6 & 16 & bipartite \\
48 & 32 & 16 & orientable & 1 & 3 & 6 &   \\
52 & 4 & 26 & orientable & 12 & 26 & 4 & bipartite \\
54 & 18 & 18 & orientable & 10 & 6 & 6 & bipartite \\
56 & 8 & 14 & orientable & 18 & 14 & 8 & bipartite \\
60 & 20 & 6 & orientable & 18 & 6 & 20 & bipartite \\
60 & 4 & 30 & orientable & 14 & 30 & 4 & bipartite \\
60 & 8 & 30 & orientable & 12 & 15 & 4 &   \\
60 & 8 & 20 & orientable & 17 & 15 & 6 &   \\
60 & 12 & 10 & orientable & 20 & 10 & 12 & bipartite \\
60 & 20 & 12 & non-orientable & 30 & 6 & 10 &   \\
60 & 40 & 12 & orientable & 5 & 3 & 10 &   \\
66 & 6 & 22 & orientable & 20 & 22 & 6 & bipartite \\
66 & 22 & 6 & orientable & 20 & 6 & 22 & bipartite \\
68 & 4 & 34 & orientable & 16 & 34 & 4 & bipartite \\
70 & 14 & 10 & orientable & 24 & 10 & 14 & bipartite \\
70 & 10 & 14 & orientable & 24 & 14 & 10 & bipartite \\
72 & 8 & 18 & orientable & 24 & 18 & 8 &   \\
72 & 24 & 12 & orientable & 19 & 6 & 12 &   \\
72 & 48 & 12 & orientable & 7 & 3 & 12 &   \\
72 & 8 & 18 & orientable & 24 & 18 & 8 & bipartite \\
72 & 24 & 18 & orientable & 16 & 6 & 8 & bipartite \\
72 & 16 & 18 & orientable & 20 & 9 & 8 &   \\
72 & 24 & 6 & orientable & 22 & 6 & 24 & bipartite \\
76 & 4 & 38 & orientable & 18 & 38 & 4 & bipartite \\
78 & 6 & 26 & orientable & 24 & 26 & 6 & bipartite \\
78 & 26 & 6 & orientable & 24 & 6 & 26 & bipartite \\
80 & 16 & 10 & orientable & 28 & 10 & 16 & bipartite \\
84 & 8 & 28 & orientable & 25 & 21 & 6 &   \\
84 & 12 & 14 & orientable & 30 & 14 & 12 & bipartite \\
84 & 8 & 42 & orientable & 18 & 21 & 4 &   \\
84 & 28 & 6 & orientable & 26 & 6 & 28 & bipartite \\
84 & 4 & 42 & orientable & 20 & 42 & 4 & bipartite \\
88 & 8 & 22 & orientable & 30 & 22 & 8 & bipartite \\
90 & 18 & 10 & orientable & 32 & 10 & 18 & bipartite \\
90 & 10 & 18 & orientable & 32 & 18 & 10 & bipartite \\
90 & 30 & 6 & orientable & 28 & 6 & 30 & bipartite \\
92 & 4 & 46 & orientable & 22 & 46 & 4 & bipartite \\
96 & 32 & 24 & orientable & 21 & 6 & 8 &   \\
96 & 64 & 24 & orientable & 5 & 3 & 8 &   \\
96 & 32 & 6 & orientable & 30 & 6 & 32 & bipartite \\
98 & 14 & 14 & orientable & 36 & 14 & 14 & bipartite \\
100 & 4 & 50 & orientable & 24 & 50 & 4 & bipartite \\

\end{longtable}
\end{center}

One can make some basic observations. The graphs of type $C(m,k)$ are obtained when $k$ is the vertex degree of the regular map. From these examples it looks that $k$ is always even or divisible by 3. Of course, this could be the case because the graphs are too small and because we have excluded the two-vertex case (which gives the generalized Petersen graphs). Note that the 2-vertex case includes $k=5$ with two examples: the 5-prism (which has 1 as simple eigenvalue) and the Petersen graph (where 1 is not simple). More generally, for every odd $k$, the $k$-prism has 1 as simple eigenvalue. For $k=3$, there are infinitely many examples of type $C(\cdot,3)$ (see Section \ref{subsec:truncation}), but it is not known whether there are infinitely many examples of type $C(\cdot,k)$ for any other $k$.


\end{document}